\newcommand{\mb}[1]{\mathbb{#1}}
\newcommand{\inj}{\hookrightarrow}
\newcommand{\gap}{\text{\space}}
\newcommand{\st}{\text{ : }}
\newcommand{\mc}[1]{\mathcal{#1}}
\newcommand{\g}{\gamma}
\newcommand{\p}{\rho}
\newcommand{\R}{\mathbb{R}}
\newcommand{\Z}{\mathbb{Z}}
\newcommand{\N}{\mathbb{N}}
\newcommand{\F}{\mathbb{F}}
\newcommand{\btz}{\begin{tikzcd}}
\newcommand{\etz}{\end{tikzcd}}
\newcommand{\x}[1]{\xrightarrow{#1}}
\newcommand{\bpm}{\begin{pmatrix}}
\newcommand{\epm}{\end{pmatrix}}
\newcommand{\bg}{\bigg}
\newcommand{\RP}{\mathbb{R}\mathbb{P}}
\newcommand{\smsh}{\wedge}
\newcommand{\tx}[1]{\text{#1}}
\newcommand{\ti}[1]{\textit{#1}}
\newcommand{\ov}[1]{\overline{#1}}
\newcommand{\aln}[1]{\begin{align*}#1\end{align*}}
\newcommand{\un}[1]{\underline{#1}}
\DeclareMathOperator{\colim}{\text{colim}}
\newtheorem{theorem}{Theorem}[section]
\newtheorem{lemma}[theorem]{Lemma}
\theoremstyle{definition}
\newtheorem{definition}[theorem]{Definition}
\theoremstyle{remark}
\newtheorem{remark}[theorem]{Remark}
\theoremstyle{construction}
\newtheorem{construction}[theorem]{Construction}
\theoremstyle{corollary}
\newtheorem{corollary}[theorem]{Corollary}
\theoremstyle{proposition}
\newtheorem{proposition}[theorem]{Proposition}
\numberwithin{equation}{section}
\begin{document}

\title{Cofreeness in Real Bordism Theory and the Segal Conjecture}

\author{Christian Carrick}

\address{University of California, Los Angeles, Los Angeles, CA 90095}
\email{carrick@math.ucla.edu}
\thanks{This material is based upon work supported by the National Science Foundation under Grant No. DMS-1811189}






\begin{abstract}
We prove that the genuine $C_{2^n}$-spectrum $N_{C_{2}}^{C_{2^n}}MU_{\mathbb{R}}$ is cofree, for all $n$. Our proof is a formal argument using chromatic hypercubes and the Slice Theorem of Hill, Hopkins, and Ravenel. We show that this gives a new proof of the Segal Conjecture for $C_2$, independent of Lin's theorem.
\end{abstract}

\maketitle

\section{Introduction}
In this paper, we establish the following result:
\begin{theorem}\label{1.1}
For all $n>0$, the $C_{2^n}$-spectrum $N_{C_{2}}^{C_{2^n}}MU_\R$ is cofree, i.e. the map
\[N_{C_{2}}^{C_{2^n}}MU_\R\to F(E{C_{2^n}}_+,N_{C_{2}}^{C_{2^n}}MU_\R)\]
is an equivalence.
\end{theorem}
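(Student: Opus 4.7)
The plan is to reduce cofreeness of $N_{C_2}^{C_{2^n}}MU_\R$ to that of simpler $C_{2^n}$-spectra using filtrations that are compatible with the functor $F(EG_+,-)$. The key observation is that cofreeness is preserved under homotopy limits: if $X \simeq \lim_\alpha X_\alpha$ and each $X_\alpha$ is cofree, then so is $X$, since $F(EG_+,-)$ commutes with limits. Thus it suffices to exhibit $N_{C_2}^{C_{2^n}}MU_\R$ as a homotopy limit of cofree $C_{2^n}$-spectra. I would attempt this by combining a chromatic hypercube with the slice filtration of Hill, Hopkins, and Ravenel.

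The first step is to deploy the chromatic hypercube, realizing $X = N_{C_2}^{C_{2^n}}MU_\R$ (working $2$-locally) as an iterated pullback of its chromatic localizations $L_h X$, with monochromatic fibers computed by the localizations $L_{K(h)} X$. This reduces the cofreeness question to verifying, for each height $h$, that $L_{K(h)}(N_{C_2}^{C_{2^n}}MU_\R)$ is cofree as a $C_{2^n}$-spectrum. By the multiplicativity of the norm and standard identifications of chromatic localizations of $MU_\R$ and $BP_\R$, each such monochromatic piece should be expressible in terms of norms of Real Morava $E$-theories---Lubin--Tate style spectra carrying a $C_{2^n}$-action. Cofreeness of such spectra is a Devinatz--Hopkins/Hopkins--Miller type statement and is well established in the nonequivariant setting; its equivariant version should follow from the standard descent arguments combined with the good formal properties of the norm functor.

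The role of the Slice Theorem is twofold. First, it gives an explicit description of the slices of $N_{C_2}^{C_{2^n}}MU_\R$ as wedges of representation-suspended copies of $H\un{\Z}$, providing the input needed to control the chromatic complexity of each slice. Second, this slice-by-slice chromatic control is what should guarantee equivariant convergence of the chromatic hypercube: the chromatic tower for an individual slice terminates at a finite height determined by the representation involved, and assembling these bounds along the slice filtration yields the needed convergence. The main obstacle I anticipate is precisely this convergence argument, since equivariant chromatic convergence does not follow formally from its nonequivariant counterpart; the explicit slice description furnished by the Slice Theorem is what should make it tractable, and packaging the argument so that the limit of cofree monochromatic pieces remains cofree will require care with the order in which the hypercube and slice tower are assembled.
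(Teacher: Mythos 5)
Your overall strategy---exhibit $N_{C_2}^{C_{2^n}}MU_\R$ as a limit of cofree spectra by assembling a hypercube of localizations slice by slice---is the same as the paper's, and the observation that cofreeness is preserved under homotopy limits (Corollary~\ref{2.3}) is the right lever. But the cube you propose to build is genuinely different from the one the paper builds, and that difference is where the argument would break.

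You propose using the honest chromatic localizations $L_h$ and $L_{K(h)}$, so that the cofreeness question reduces to showing $L_{K(h)}(N_{C_2}^{C_{2^n}}MU_\R)$ is cofree by identifying it with a norm of a Real Lubin--Tate spectrum and appealing to Devinatz--Hopkins / Hopkins--Miller descent. Neither step is clear. The Devinatz--Hopkins and Hopkins--Miller results concern finite subgroups of the Morava stabilizer group acting on Morava $E$-theory via $\mathbf{E}_\infty$-automorphisms; the $C_{2^n}$-action here comes from the norm, and it is not a formality that $L_{K(h)}$ of a norm of $MU_\R$ is cofree (indeed, that would itself be a nontrivial theorem). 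Moreover, reassembling $X$ from its $L_h X$'s requires an \emph{equivariant} chromatic convergence statement $X \simeq \operatorname{holim}_n L_n X$, which is a deep theorem even nonequivariantly (Hopkins--Ravenel) and is not available off the shelf in $Sp^{C_{2^n}}$.

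The paper avoids both problems by using a ``local cohomology'' cube rather than a chromatic fracture cube: the vertices are $BP^{((C_4))}[N_{C_2}^{C_4}(\ov{t_{i_1}}\cdots\ov{t_{i_j}})^{-1}]$, where the $\ov{t_i}$ lift the classes $t_i\in\pi_*(BP\smsh BP)$ (note, not the $v_i$'s, and not any Bousfield localization). These vertices are cofree for an elementary reason: inverting a normed class kills \emph{all} geometric fixed points at nontrivial subgroups, so cofreeness follows by isotropy separation (Proposition~\ref{3.12}). No descent theorem is needed. Convergence is then established not via chromatic convergence but by passing to the slice filtration: Lemma~\ref{3.6} shows the slice tower functor commutes with filtered colimits, so the slice associated graded of each vertex is an explicit localization of $H\un{\Z}_{(2)}\smsh\widehat W$, and the resulting cubes of slices split off a constant copy of the slices of $BP^{((C_4))}$ plus a cube ``built from disjoint split inclusions'' whose limit term is pro-zero (Propositions~\ref{3.5}, \ref{3.10}, \ref{3.11}). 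Finally, the paper does not attack general $n$ directly: it proves the $C_4$ case (Theorem~\ref{3.14}), deduces Lin's Theorem and the Hu--Kriz $n=1$ case from it (Proposition~\ref{2.11}), and then bootstraps to all $n$ via the Nikolaus--Scholze gluing diagram (Theorem~\ref{2.8}). Your proposal leaves both the cofreeness of the vertices and the convergence of the tower as open problems that are at least as hard as the theorem itself.
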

The case $n=1$ of this result was proven by Hu and Kriz via direct computation \cite{Hu}. The equivariant spectra $N_{C_{2}}^{C_{2^n}}MU_\R$ play a central role in the solution to the Kervaire Invariant One problem by Hill, Hopkins, and Ravenel \cite{HHR}. Their detecting spectrum $\Omega$ is the homotopy fixed point spectrum of a localization $\Omega_{\mb O}:=D^{-1}N_{C_{2}}^{C_{8}}MU_\R$ of $N_{C_{2}}^{C_{8}}MU_\R$. An essential piece of their argument is the Homotopy Fixed Point Theorem (\cite{HHR}, 1.10), which states that this homotopy fixed point spectrum coincides with the \ti{genuine} fixed point spectrum, i.e. that $\Omega_{\mb O}$ is cofree. Our result shows that this holds even \ti{before} localization away from $D$.

We use \cref{1.1} to give a new, more conceptual proof of a fundamental and deep result in equivariant homotopy theory, the Segal Conjecture for $C_2$:
\begin{theorem}\label{1.2} For any bounded below spectrum $X$, the Tate diagonal
\[X\to (N_e^{C_2}(X))^{tC_2}\]
is a $2$-complete equivalence.
\end{theorem}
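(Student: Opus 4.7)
The plan is to reduce Theorem 1.2 to verifying the Tate diagonal on the sphere spectrum --- which is Lin's theorem --- and then to establish this directly from cofreeness of $MU_\R$ rather than by Lin's original $H\F_2$-cohomology argument.

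First, I would invoke the Nikolaus-Scholze reduction technique: the class of bounded below spectra for which the Tate diagonal $X\to (N_e^{C_2}X)^{tC_2}$ is a 2-complete equivalence is closed under cofiber sequences, bounded below Postnikov towers, and filtered colimits, reducing the problem to $X=S$. Since $N_e^{C_2}S\simeq S$, it remains to show $S\to S^{tC_2}$ is a 2-complete equivalence.

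The essential general observation is that for any cofree $C_2$-spectrum $Y$, the comparison map $\Phi^{C_2}Y\to Y^{tC_2}$ is automatically an equivalence: by construction $Y^{tC_2}=\Phi^{C_2}F(E{C_2}_+,Y)$, and cofreeness is precisely the statement that $Y\to F(E{C_2}_+,Y)$ is an equivalence. Combined with $\Phi^{C_2}MU_\R=MO$, \cref{1.1} yields $MU_\R^{tC_2}\simeq MO$. Moreover, restriction along $C_2\subset C_{2^n}$ preserves cofreeness (because $i_{C_2}^*EC_{2^n}$ is a model for $EC_2$), so combining the restriction formula $i_{C_2}^*N_{C_2}^{C_{2^n}}MU_\R\simeq MU_\R^{\smsh 2^{n-1}}$ with \cref{1.1} shows that arbitrarily large smash powers of $MU_\R$ (at least those of length a power of two) are cofree as $C_2$-spectra.

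To pass from $MU_\R$-cofreeness to Lin's theorem, I would consider the Amitsur cosimplicial object $MU_\R^{\smsh(\bullet+1)}$ associated to the unit $S\to MU_\R$ of $C_2$-$E_\infty$ ring spectra. By cofreeness of the smash powers --- directly for powers of two, and via an additional argument (for instance, using that the full Amitsur tower is dominated by a cofinal subtower of power-of-two levels, or extending the cofreeness argument term by term) for the remaining smash powers --- the comparison $\Phi^{C_2}\to (-)^{tC_2}$ is a degreewise equivalence on this diagram. The problem then reduces to convergence of both totalizations: the $\Phi^{C_2}$-side yielding $S=\Phi^{C_2}(N_e^{C_2}S)$ and the Tate side yielding $S^{tC_2}$.

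The main obstacle is precisely this convergence, since neither $\Phi^{C_2}$ nor $(-)^{tC_2}$ commutes with cosimplicial totalization in general. I would expect to establish the required interchange from slice connectivity estimates in the spirit of Hill-Hopkins-Ravenel, which already underpin the proof of \cref{1.1} via the chromatic hypercube machinery. Once in place, the degreewise equivalence forces $S\to S^{tC_2}$ to be a 2-complete equivalence, which by the Nikolaus-Scholze reduction in the first step implies \cref{1.2} for all bounded below $X$.
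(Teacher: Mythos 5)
Your reduction observations are sound in places---the identification $\Phi^{C_2}Y \simeq Y^{tC_2}$ for cofree $Y$, and the use of the restriction formula $i_{C_2}^*N_{C_2}^{C_{2^n}}MU_\R\simeq MU_\R^{\smsh 2^{n-1}}$ to obtain cofree smash powers---but your overall strategy contains a gap you yourself flag and do not close, and it is not a small one.

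Your plan is to run the Amitsur cosimplicial resolution of the $C_2$-sphere along $S\to MU_\R$, apply $\Phi^{C_2}$ and $(-)^{tC_2}$ levelwise (where they agree by cofreeness), and compare totalizations. The problem is that $\Phi^{C_2}$ does \emph{not} commute with cosimplicial totalizations, and slice connectivity estimates will not rescue this: geometric fixed points collapse slice connectivity ($\Phi^{C_2}S^{k\rho}\simeq S^k$), so the Tot-tower you would need to control has no obvious uniform connectivity gain, and $\Phi^{C_2}$ of the limit of the $MU_\R$-Amitsur tower need not be the limit of the geometric fixed points. The analogous issue on the Tate side is exactly the phenomenon the Nikolaus--Scholze Tate orbit lemma exists to manage, but only in the specific form of the iterated-fixed-point decomposition, not for arbitrary cosimplicial diagrams. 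The paper avoids this entirely: instead of descent along $S\to MU_\R$, it applies Nikolaus--Scholze's finite isotropy-separation limit diagram (\cref{2.7}) to $X=BP^{((C_4))}$ with $n=2$. In that $3\times 2$ square the left vertical is an equivalence by cofreeness of $BP^{((C_4))}$, the middle vertical by cofreeness of $i^{C_4}_{C_2}BP^{((C_4))}=BP_\R^{\smsh 2}$, and the top-right vertical is literally the Tate diagonal $H\F_2\to (N_e^{C_2}H\F_2)^{tC_2}$, which is therefore an equivalence with no convergence issue whatsoever. This is \cref{2.11}. Combined with (\cite{NS}, III.1.7), which reduces the bounded-below case to $X=H\F_2$ (not $X=S^0$, as you suggest), this gives \cref{1.2}.

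Two further, smaller points. First, your initial reduction to $X=S^0$ via "cofiber sequences, Postnikov towers, and filtered colimits" is not the reduction in \cite{NS}: their III.1.7 reduces to $H\F_2$, using the $(-1)$-truncating property of the Tate-diagonal fiber; the reduction is genuinely nonformal because the norm is not exact and $(-)^{tC_2}$ does not commute with filtered colimits. Second, the claim that the Amitsur tower "is dominated by a cofinal subtower of power-of-two levels" does not make sense for a cosimplicial object; the correct device is \cref{2.6}, which shows that cofreeness of $MU_\R$ already implies cofreeness of \emph{all} smash powers $MU_\R^{\smsh n}$ via the slice-connectivity splitting of \cref{2.5}.
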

Originally proven by Lin \cite{Lin} in the case $X=S^0$, the Segal Conjecture for $C_2$ has been substantially generalized and was stated in the above form first by Lunøe-Nielsen and Rognes (\cite{LeNR}, 5.13), who established the theorem for $X$ with finitely generated homotopy groups. The result was extended to all $X$ bounded below by Nikolaus and Scholze (\cite{NS}, III.1.7). We refer the reader to the introduction of \cite{HW} for a discussion of the different forms of the Segal Conjecture for $C_2$ and their relation to Lin's Theorem.

Lin's proof involves a difficult calculation of a continuous Ext group
\[\widehat{\tx{Ext}}_{\mc A}(H^*(\RP^\infty_{-\infty};\F_2);\F_2)\]
where $\mc A$ is the Steenrod algebra. Nikolaus and Scholze showed, however, that \cref{1.2} follows formally for all $X$ bounded below from the case $X=H\F_2$. Hahn and Wilson \cite{HW} used this to show that \cref{1.2} can be established by analysis of the descent spectral sequence for the map
\[N_e^{C_2}H\F_2\to H\un{\F_2}\]
which reduces to a continuous Ext group calculation over a much smaller polynomial coalgebra $\F_2[x]$. 

We give a proof of Lin's Theorem that involves essentially no homological algebra and proceeds from a chromatic approach. Essential to our proof is the identification $\Phi^{C_2}(N_{C_2}^{C_4}BP_\R)\simeq N_e^{C_2}H\F_2$. In \cite{MSZ}, Meier, Shi, and Zeng use this identification to deduce differentials in the homotopy fixed point spectral sequence of $N_e^{C_2}H\F_2$ from differentials in the slice spectral sequence of $N_{C_2}^{C_4}BP_\R$, thus establishing a connection between the Segal Conjecture and the Hill-Hopkins-Ravenel (HHR) Slice Theorem. We make this connection precise by proving the following:

\begin{theorem}\label{1.3}
For any $n>1$, the cofreeness of $N_{C_2}^{C_{2^n}}MU_\R$ is equivalent to Lin's Theorem together with the cofreeness of $MU_\R$.
\end{theorem}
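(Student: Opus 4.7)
The plan is to use the standard reformulation of cofreeness in terms of subgroup-by-subgroup Tate diagonals --- a $C_{2^n}$-spectrum $Y$ is cofree iff $\Phi^{C_{2^k}}Y \to Y^{tC_{2^k}}$ is an equivalence for each $1 \leq k \leq n$ --- and to match each individual Tate diagonal condition for $X := N_{C_2}^{C_{2^n}}MU_\R$ with either cofreeness of $MU_\R$ or Lin's Theorem.

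First I would compute $\Phi^{C_{2^k}}X$ using $\Phi^G N_H^G Y \simeq \Phi^H Y$, the norm-restriction formula $i_{C_{2^k}}^* N_{C_2}^{C_{2^n}}MU_\R \simeq (N_{C_2}^{C_{2^k}}MU_\R)^{\smsh 2^{n-k}}$, and $\Phi^{C_2}MU_\R \simeq H\F_2$; the result is $\Phi^{C_{2^k}}X \simeq (H\F_2)^{\smsh 2^{n-k}}$. For the Tate side, iteratively applying the Tate orbit lemma of Nikolaus--Scholze (whose bounded-below hypothesis holds at each step since $MU_\R$ is connective) reduces $X^{tC_{2^k}}$ for $k \geq 2$ to $((N_e^{C_2}H\F_2)^{\smsh 2^{n-k}})^{tC_2}$, and the identification $(N_e^{C_2}H\F_2)^{\smsh j} \simeq N_e^{C_2}((H\F_2)^{\smsh j})$ of $C_2$-spectra (by conjugacy of the relevant involutions on a smash power) puts this in a form recognizable as an instance of Lin's Theorem.

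With these reductions the $k \geq 2$ conditions become instances of Lin's Theorem applied to the connective spectra $(H\F_2)^{\smsh 2^{n-k}}$, and the $k = 1$ condition asks that $(H\F_2)^{\smsh 2^{n-1}} \simeq (MU_\R^{\smsh 2^{n-1}})^{tC_2}$ with diagonal $C_2$-action. The latter follows from cofreeness of $MU_\R$, because smashing a cofree bounded-below $C_2$-spectrum with itself preserves cofreeness (via a Milnor-sequence argument using the compact filtration of $EG_+$), making $MU_\R^{\smsh 2^{n-1}}$ cofree so that its Tate agrees with its geometric fixed points. Conversely, cofreeness of $X$ restricts to cofreeness of $MU_\R^{\smsh 2^{n-1}}$, and since $MU_\R$ is an $E_\infty$-ring and hence a retract of this smash power, cofreeness passes to $MU_\R$; simultaneously, the top-level Tate diagonal for $X$ (at $k = n$) after Tate orbit reduction is precisely Lin's Theorem for $X = H\F_2$, from which the general bounded-below version follows by the Nikolaus--Scholze argument recalled in the introduction.

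The main obstacle I expect is the bookkeeping of the iterated Tate orbit reduction --- verifying that the bounded-below hypothesis propagates through the intermediate stages and that the residual actions line up correctly --- together with justifying the smash-power preservation of cofreeness at the bottom level, which requires a filtered-colimit argument on $EG_+$ rather than naive smash-compactness.
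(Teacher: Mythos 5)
Your proposal takes essentially the same approach as the paper: both directions reduce, via iterated application of the Nikolaus--Scholze Tate orbit lemma, to cofreeness of $MU_\R^{\smsh j}$ and Lin's theorem for $(H\F_2)^{\smsh j}$, matching the arguments of Theorem 2.8 and Proposition 2.11. The only real difference in packaging is that the paper invokes the ready-made gluing limit diagram of Corollary II.4.7 of \cite{NS} (Proposition 2.7) and phrases the inductive step as cofreeness of the $C_2$-spectra $i_{C_2}^*\widetilde{\Phi}^{C_{2^{k-1}}}X$, whereas you unwind the iterated Tate reduction subgroup by subgroup and match each condition with a Lin instance directly; one small point to make explicit in the backward direction is that identifying $X^{tC_{2^k}}$ with $(N_e^{C_2}(H\F_2)^{\smsh 2^{n-k}})^{tC_2}$ uses the lower-index conditions, so that direction is genuinely an induction on $k$.
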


\cref{1.3} is a formal consequence of the Nikolaus-Scholze Tate orbit lemma (\cite{NS}, I.2.1), and this gives a straightforward proof of \cref{1.1} using Lin's Theorem and the result of Hu and Kriz. On the other hand, we give an independent proof of the cofreeness of $N_{C_2}^{C_{2^n}}MU_\R$ that works for \ti{all} $n>0$, following a chromatic approach which depends only on the HHR Slice Theorem (\cite{HHR}, 6.1).

We give a sketch here of this proof in the case $n=1$. The idea is that $BP_\R[\ov{v_i}^{-1}]$ is cofree for formal reasons, so one can take an approach via local cohomology and form cartesian cubes
\[
\btz
\tilde{L}_2BP_\R\arrow[r]\arrow[d]&BP_\R[\ov{v_1}^{-1}]\arrow[d]\\
BP_\R[\ov{v_2}^{-1}]\arrow[r]&BP_\R[(\ov{v_1}\ov{v_2})^{-1}]
\etz
\]
\[
\btz
&BP_\R[\ov{v_3}^{-1}]\arrow[rr]\arrow[dd]&&BP_\R[(\ov{v_2}\ov{v_3})^{-1}]\arrow[dd]\\
\tilde{L}_3BP_\R\arrow[ur]\arrow[dd]\arrow[rr,crossing over]&&BP_\R[\ov{v_2}^{-1}]\arrow[ur]\\
&BP_\R[(\ov{v_1}\ov{v_3})^{-1}]\arrow[rr]&&BP_\R[(\ov{v_1}\ov{v_2}\ov{v_3})^{-1}]\\
BP_\R[\ov{v_1}^{-1}]\arrow[ur]\arrow[rr]&&BP_\R[(\ov{v_1}\ov{v_2})^{-1}]\arrow[ur]\arrow[uu,leftarrow,crossing over]
\etz
\]
and so on, and $\tilde{L}_nBP_\R$ is cofree for all $n$. Applying the slice tower to each vertex $BP_\R[(\ov{v_{i_1}}\cdots\ov{v_{i_j}})^{-1}]$, one forms a cartesian cube in filtered $C_2$-spectra, and the limit term gives a modified slice filtration of $\tilde{L}_nBP_\R$. It is then a formal consequence of the HHR Slice Theorem that, taking the limit in $n$, one recovers the slice tower of $BP_\R$.


\begin{remark}
Our results should shed light on the spectral sequences studied in Meier, Shi, and Zeng \cite{MSZ}. In particular, the map from the Slice SS of $N_{C_2}^{C_4}BP_\R$ to its HFPSS is an isomorphism below a line of slope 1 (see \cite{Ullman}). The Slice SS vanishes above a line of slope 3, but there are many classes above this line in the HFPSS. By Theorem \hyperref[1.1]{1.1}, the map between them must give an isomorphism on their $E_\infty$-pages, so there must be some pattern of differentials killing all the classes above this line in the HFPSS.
\end{remark}

\subsection*{Summary} In \cref{2}, we show that the cofreeness of $N_{C_2}^{C_{2^n}}MU_\R$ follows formally from (and is equivalent to) the Hu-Kriz $n=1$ case together with Lin's Theorem. This is the most direct way to \cref{1.1}, using these known results. In \cref{3}, we withhold knowledge of these theorems and give a different proof - via chromatic hypercubes - that $N_{C_2}^{C_4}BP_\R$ is cofree. In turn, this result implies the $n=1$ case and Lin's Theorem, which then gives the result for $n>2$ by the same induction used in \cref{2}. 

\subsection*{Notation and Conventions} We use $Sp^G$ to denote the category of orthogonal $G$-spectra or the associated $\infty$-category given by taking the homotopy coherent nerve of bifibrant objects in the stable model structure of Mandell and May \cite{MM}. We use the notation $MU^{((G))}$ and $BP^{((G))}$ to denote $N_{C_2}^GMU_\R$ and $N_{C_2}^GBP_\R$ respectively, as in HHR.

\subsection*{Acknowledgments} The $n=1$ case of our chromatic hypercubes result - namely that $BP_\R=\mathrm{holim}_n\tilde{L}_nBP_\R$ - is due to Mike Hill. It was his idea to use this approach to establish the $n>1$ cases. We thank him for introducing us to this problem and for his guidance throughout the project. 


\section{Cofreeness and Gluing Maps}\label{2}
\subsection{Cofreeness} We begin by reviewing the notion of cofreeness for a genuine $G$-spectrum. 
\begin{proposition}\label{2.1}
For $X\in Sp^G$, the following are equivalent
\begin{enumerate}
\item $X\to F(EG_+,X)$ is an equivalence of $G$-spectra.
\item $X^H\to X^{hH}$ is an equivalence of spectra for all $H\subset G$.
\item $X$ is $G_+$-local.
\end{enumerate}
\end{proposition}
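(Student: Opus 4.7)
The plan is to handle $(1) \iff (2)$ by directly identifying the relevant fixed-point spectra, and $(1) \iff (3)$ via the isotropy separation cofiber sequence $EG_+ \to S^0 \to \tilde{EG}$ together with Frobenius reciprocity.

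For $(1) \iff (2)$, the key observation is the identification
\[F(EG_+,X)^H \simeq X^{hH}\]
for every subgroup $H \leq G$. This holds because $EG$ restricted to $H$ still has only trivial isotropy, hence models $EH$; therefore $F(EG_+,X)^H = \mathrm{Map}^H(EG_+,X) \simeq X^{hH}$. Under this identification, the map on $H$-fixed points induced by $X \to F(EG_+,X)$ is precisely the canonical map $X^H \to X^{hH}$. Since equivalences in $Sp^G$ are detected by $H$-fixed points for all $H \leq G$, this immediately gives $(1) \iff (2)$.

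For $(1) \iff (3)$, I would begin by smashing the cofiber sequence $EG_+ \to S^0 \to \tilde{EG}$ into $F(-,X)$ to obtain the fiber sequence
\[F(\tilde{EG},X) \to X \to F(EG_+,X),\]
so that $(1)$ is equivalent to $F(\tilde{EG},X) \simeq 0$. Next, recall that $G_+ \wedge A \simeq \mathrm{ind}_e^G \mathrm{res}_e^G A$, so $A$ is $G_+$-acyclic iff its underlying spectrum vanishes. Since $EG$ is non-equivariantly contractible, $\mathrm{res}_e^G \tilde{EG} \simeq 0$; hence if $X$ is $G_+$-local then $F(\tilde{EG},X)\simeq 0$, yielding $(3) \Rightarrow (1)$. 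For the converse, suppose $X \simeq F(EG_+,X)$ and let $A$ be any $G_+$-acyclic spectrum. Then
\[F(A,X) \simeq F(A, F(EG_+,X)) \simeq F(A \wedge EG_+, X),\]
and since $EG$ admits a $G$-CW structure with only free cells, $EG_+$ is built as a colimit of wedges of $G_+ \wedge S^n$; smashing with $A$ we get that $A \wedge EG_+$ is built from $A \wedge G_+ \wedge S^n \simeq 0$, whence $F(A,X) \simeq 0$, proving $(1) \Rightarrow (3)$.

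There is no real obstacle here — the statement is essentially a dictionary between three standard formulations of Borel-completeness, and every step is a formal manipulation. The most substantive ingredient is the identification $F(EG_+,X)^H \simeq X^{hH}$, which relies only on the fact that $EG$ restricts to a model for $EH$, together with the observation that $EG_+$ is cellularly free, used to propagate $G_+$-acyclicity through the smash product.
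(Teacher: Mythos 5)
Your proof is correct and relies on the same ingredients as the paper's: the identification $i^G_H F(EG_+,X) \simeq F(EH_+, i^G_H X)$ together with joint conservativity of the fixed-point functors for $(1)\iff(2)$, and the Frobenius/shearing relation together with the fact that $EG_+$ lies in the localizing subcategory generated by $G_+$ for $(1)\iff(3)$. The only difference is organizational: the paper directly identifies $F(EG_+,X)$ as the Bousfield localization $L_{G_+}(X)$, whereas you prove the two implications of $(1)\iff(3)$ separately via the isotropy-separation fiber sequence $F(\widetilde{EG},X)\to X\to F(EG_+,X)$ -- mathematically equivalent, and slightly more self-contained.
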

\begin{proof}
For $1\iff 3$, it suffices to show that $L_{G_+}(X)=F(EG_+,X)$. The map
\[X\to F(EG_+,X)\]
becomes an equivalence after smashing with $G_+$ by the Frobenius relation, and the target is $G_+$-local because if $Z\smsh G_+\simeq*$, then
\[[Z,F(EG_+,X)]^G=[Z\smsh EG_+,X]^G=0\]
as $EG_+$ is in the localizing subcategory generated by $G_+$. $1\iff 2$ follows from the fact that the fixed point functors $(-)^H$ are jointly conservative, and
\[i^G_H(F(EG_+,X))=F(EH_+,i^G_HX)\]
as can be seen from the more general statement
\[i^G_H(L_E(X))=L_{i^G_HE}(i^G_HX)\]
(see \cite{Smashing}, 3.2).
\end{proof}
\begin{definition}\label{2.2}
We say a $G$-spectrum $X$ is \ti{cofree} if any of the equivalent conditions in \cref{2.1} hold.
\end{definition}
\begin{corollary}\label{2.3}
The category of cofree $G$-spectra is closed under homotopy limits.
\end{corollary}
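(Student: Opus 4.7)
The plan is to deduce this directly from the equivalent characterizations in \cref{2.1}, without any further computation. The cleanest approach uses the $G_+$-locality characterization (3): a limit of local objects is local with respect to any localization. In more detail, suppose $\{X_i\}$ is a diagram of cofree $G$-spectra, and let $Z$ be any $G$-spectrum with $Z\smsh G_+\simeq *$. Then for each $i$, we have $[Z,X_i]^G=0$, and since mapping spectra send homotopy limits in the target to homotopy limits, we get
\[F^G(Z,\mathrm{holim}_i X_i)\simeq \mathrm{holim}_i F^G(Z,X_i)\simeq *,\]
so $\mathrm{holim}_i X_i$ is again $G_+$-local, hence cofree.

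Alternatively, and equivalently, one can use characterization (1) directly. The functor $F(EG_+,-)$ is a right adjoint (to smashing with $EG_+$), hence commutes with homotopy limits. Thus given a diagram $\{X_i\}$ of cofree $G$-spectra, the natural map
\[\mathrm{holim}_i X_i \to \mathrm{holim}_i F(EG_+,X_i)\simeq F(EG_+,\mathrm{holim}_i X_i)\]
is a homotopy limit of equivalences and hence an equivalence, so $\mathrm{holim}_i X_i$ is cofree.

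Either formulation takes a couple of lines, so there is no real obstacle; the only thing to be careful about is invoking the correct formal property (mapping spectra take homotopy limits in the target to homotopy limits, or equivalently $F(EG_+,-)$ preserves limits as a right adjoint). I would write up the second version since it makes the statement most transparent and directly refers to the definition of cofreeness given in \cref{2.2}.
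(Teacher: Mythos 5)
Your proposal is correct, and your first formulation is precisely the paper's one-line argument (``this is true of any category of $E$-locals,'' with $E = G_+$) spelled out in detail; the second formulation via $F(EG_+,-)$ being a limit-preserving right adjoint is an equivalent repackaging of the same fact. Either version is fine and matches the intent of the paper.
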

\begin{proof}
This is true of any category of $E$-locals.
\end{proof}
\begin{remark}
Cofree $G$-spectra are often called \ti{Borel complete}, or just \ti{Borel}. The source of this terminology is the fact that there is a forgetful functor
\[Sp^G\to \mathrm{Fun}(BG,Sp)\]
from genuine $G$-spectra to so-called Borel $G$-spectra. For formal reasons, this functor admits a right adjoint, and it is not hard to show that this right adjoint is an equivalence onto the full subcategory of cofree $G$-spectra.
\end{remark}

We will make use of the slice filtration on $G$-spectra, introduced for $C_2$-spectra by Dugger \cite{Dugger} and generalized to all finite groups $G$ by HHR \cite{HHR}. Let $X\ge n$ denote that a $G$-spectrum is slice $\ge n$, i.e. $X$ is slice $(n-1)$-connected. We need the following useful lemma:
\begin{lemma}\label{2.5}
Suppose $\{X_i\}_{i\in\N}$ is a family of $G$-spectra such that, for all $n\in\Z$, all but finitely many $X_i$ have the property that $X_i\ge n$. Then the canonical map
\[\bigvee\limits_iX_i\to\prod\limits_iX_i\]
is an equivalence.
\end{lemma}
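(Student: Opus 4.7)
The plan is to show that the cofiber of the canonical assembly map $f\colon\bigvee_i X_i\to\prod_i X_i$ is slice $\ge n$ for every $n\in\Z$, and then invoke completeness of the slice filtration to conclude it is contractible. This strategy sidesteps any direct computation with homotopy groups and leverages only the closure properties of the slice filtration.

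The first step is to use the hypothesis to split the index set. For fixed $n$, write $\N=F_n\sqcup I_n$ where $F_n$ is finite and $X_i\ge n$ for every $i\in I_n$. Both the wedge and the product commute with this decomposition, and on the finite part $F_n$ the map $f$ restricts to the canonical equivalence $\bigvee_{F_n}X_i\simeq\prod_{F_n}X_i$. Consequently the cofiber of $f$ is naturally equivalent to the cofiber of the restricted map
\[\bigvee_{i\in I_n}X_i\longrightarrow\prod_{i\in I_n}X_i.\]

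The second step is to observe that both the source and target of this restricted map are slice $\ge n$. For the source, this is immediate from the fact that slice $\ge n$ is a localizing subcategory of $Sp^G$ and is therefore closed under arbitrary coproducts. For the target, one uses the characterization that $X\ge n$ iff $[\hat S(m,H),X]^G=0$ for each slice cell $\hat S(m,H)=G_+\wedge_H S^{m\rho_H-\epsilon}$ with $m|H|-\epsilon<n$; since these slice cells are finite $G$-CW spectra and in particular compact, the vanishing of $[\hat S(m,H),-]^G$ is preserved by arbitrary products. Hence $\prod_{i\in I_n}X_i$ is slice $\ge n$. Because the slice $\ge n$ subcategory is also closed under cofibers, the cofiber of $f$ is slice $\ge n$ as desired.

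Finally, since $n$ was arbitrary and the slice filtration is complete (an object slice $\ge n$ for all $n$ is contractible, by, e.g., HHR 4.40), the cofiber of $f$ is contractible and $f$ is an equivalence. The main subtle point — and the only non-formal ingredient — is the closure of slice $\ge n$ under infinite products; this is where compactness of the slice cells is essential, whereas the closure under coproducts and cofibers is automatic from the localizing-subcategory characterization.
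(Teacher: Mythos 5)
Your overall architecture is fine: splitting off the finite part of the index set, noting that the restricted map on that part is an equivalence, reducing to showing the cofiber is slice $\ge n$ for every $n$, and concluding via (\cite{HHR}, 4.40) that an object which is slice $\ge n$ for all $n$ is contractible. Closure of the slice $\ge n$ subcategory under coproducts and cofibers is also correctly invoked. The gap is the step asserting that $\prod_{i\in I_n}X_i\ge n$. You justify it by the ``characterization'' that $X\ge n$ if and only if $[\hat S(m,H),X]^G=0$ for all slice cells of dimension $<n$. That is not the definition and it is not a result you can quote from \cite{HHR}: $\tau_{\ge n}$ is \emph{defined} as the localizing subcategory generated by slice cells of dimension $\ge n$, and what is formal is the right-orthogonal characterization of slice $<n$ objects (maps \emph{out of} $\tau_{\ge n}$ vanish), not a left vanishing criterion for membership in $\tau_{\ge n}$. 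Neither direction of your iff is formal. For ``$X\ge n$ implies the vanishing,'' note that for a fixed slice cell $\hat S$ of dimension $<n$ the class $\{X:[\hat S,X]^G=0\}$ is closed under coproducts and extensions but not obviously under the cofibers used to build $\tau_{\ge n}$: the relevant long exact sequence produces a term $[\Sigma^{-1}\hat S,-]^G$, and $\Sigma^{-1}$ of an already desuspended slice cell is no longer a slice cell, so containing the generators does not yield containment of all of $\tau_{\ge n}$. For ``the vanishing implies $X\ge n$,'' this is a genuine theorem about the slice filtration (essentially the content of Hill--Yarnall's reformulation via connectivity of geometric fixed points, plus an isotropy-separation argument), not something available off the shelf here; note also that closure of $\tau_{\ge n}$ under infinite products is itself delicate precisely because geometric fixed points do not commute with infinite products. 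A smaller point: compactness of slice cells is irrelevant to the product step (it governs coproducts and filtered colimits); the vanishing of $[\hat S,\prod_i X_i]^G=\prod_i[\hat S,X_i]^G$ is just the universal property of the product, so compactness is not where the difficulty lies -- the difficulty is the iff itself, which is carrying the entire proof.

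By contrast, the intended argument needs none of this. Since $\underline{\pi}_k$ takes wedges to direct sums and products to products, it suffices to check that for each fixed $k$ the map $\bigoplus_i\underline{\pi}_k(X_i)\to\prod_i\underline{\pi}_k(X_i)$ is an isomorphism, and this follows because, by the one-directional vanishing range of (\cite{HHR}, 4.40), the hypothesis forces $\underline{\pi}_k(X_i)=0$ for all but finitely many $i$ in each degree $k$. If you want to salvage your route, you would need to either prove the characterization you assert (at least for the groups at hand) or prove directly that $\tau_{\ge n}$ is closed under arbitrary products; either is substantially more work than the statement being proved.
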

\begin{proof}
It suffices to show that, for all $k\in\Z$, the map of Mackey functors 
\[\bigoplus\limits_i\un{\pi}_k(X_i)\cong\un{\pi}_k\bg(\bigvee\limits_iX_i\bg)\to\un{\pi}_k\bg(\prod\limits_iX_i\bg)\cong\prod\limits_i\un{\pi}_k(X_i)\]
is an isomorphism. This follows immediately from the observation that $\un{\pi}_k(X_i)=0$ for all but finitely many $i$. Indeed, by (\cite{HHR}, 4.40), if $Y\ge n$, then $\un{\pi}_k(Y)=0$ for $k<\lfloor n/|G|\rfloor$ when $n\ge0$ and for $k<n$ when $n\le 0$.
\end{proof}
\begin{proposition}\label{2.6}
If $MU_\R$ is cofree, then $MU_\R^{\smsh n}$ is cofree for all $n\ge1$, and similarly for $BP_\R^{\smsh n}$.
\end{proposition}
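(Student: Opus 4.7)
The plan is to reduce the cofreeness of $MU_\R^{\smsh n}$ to the hypothesis that $MU_\R$ itself is cofree by expressing $MU_\R^{\smsh n}$ as a homotopy limit of representation-suspensions of $MU_\R$, and then invoking \cref{2.3}.

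First, I would use the Real orientability of $MU_\R$ together with the standard formal group law calculus of two orientations on $MU_\R\smsh MU_\R$ to obtain a splitting
\[MU_\R\smsh MU_\R\simeq\bigvee_\alpha\Sigma^{|\alpha|\rho}MU_\R\]
as left $MU_\R$-modules, where $\alpha$ ranges over monomials in formal variables $b_1,b_2,\ldots$ with $|b_i|=i$ and $\rho=1+\sigma$ is the regular representation of $C_2$. Iterating, $MU_\R^{\smsh n}$ admits an analogous decomposition as a wedge of regular representation shifts $\Sigma^{k\rho}MU_\R$ with $k\ge 0$, and the multiplicity of $\Sigma^{k\rho}MU_\R$ in each degree $k$ is finite (bounded by an iterated partition count). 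The argument for $BP_\R$ is the same once one uses that $BP_\R$ is Real-oriented and inherits an analogous splitting in which the generators $t_i$ sit in degrees $(2^i-1)\rho$.

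Second, I would apply \cref{2.5}. By the HHR Slice Theorem, $MU_\R$ is slice $\ge 0$, so $\Sigma^{k\rho}MU_\R\ge 2k$; combined with finite multiplicity in each degree $k$, only finitely many summands fail to be slice $\ge m$ for any given $m\in\Z$. Hence the canonical map from the wedge to the corresponding product is an equivalence. Since smashing with the invertible spectrum $S^{k\rho}$ has both a left and right adjoint, it commutes with $F(E{C_2}_+,-)$, so $\Sigma^{k\rho}MU_\R$ is cofree whenever $MU_\R$ is. By \cref{2.3}, the product of these cofree spectra is cofree, and we conclude that $MU_\R^{\smsh n}$ is cofree. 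The same argument applies verbatim to $BP_\R^{\smsh n}$.

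The main obstacle is the first step: establishing the equivariant wedge decomposition of $MU_\R\smsh MU_\R$ with regular-representation grading. While the integer-graded classical statement $MU_*MU=MU_*[b_1,b_2,\ldots]$ is standard, the Real refinement requires verifying that the change-of-orientation classifier lives in regular-representation degrees $k\rho$ rather than merely in integer degrees $2k$ — this representation grading is what makes \cref{2.5} applicable and is ultimately justified by the Slice Theorem, which constrains the slice cells of $MU_\R$ (and its smash powers) to be built from regular representation spheres. Once this splitting is in place, the rest of the proof is a short formal combination of \cref{2.5} and \cref{2.3}.
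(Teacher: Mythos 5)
Your proof is correct and is essentially the paper's proof: split $MU_\R^{\smsh n}$ as a wedge of regular-representation suspensions via Real orientability, use \cref{2.5} together with the connectivity estimate $S^{k\rho}\ge 2k$ to turn the wedge into a product, and conclude via \cref{2.3} and closure of cofree spectra under smashing with dualizable objects. (The paper phrases this as a one-step induction, writing $MU_\R^{\smsh n}\simeq\bigvee_{m\in M}S^{\frac{|m|}{2}\rho}\smsh MU_\R^{\smsh(n-1)}$, while you iterate down to $MU_\R$ directly, but this is only a cosmetic difference.) One small correction to your final paragraph: the fact that the classes $\ov{b_i}$ lift to $\pi^{C_2}_{i\rho}$ is not a consequence of the HHR Slice Theorem — it is a direct consequence of Real orientability (Araki, Hu–Kriz), which is exactly the justification the paper gives. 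The Slice Theorem (more precisely, \cite{HHR} 4.26 and 4.40) enters only afterward, to supply the slice connectivity bound $S^{k\rho}\smsh MU_\R^{\smsh(n-1)}\ge 2k$ that makes \cref{2.5} applicable. So what you flagged as the "main obstacle" is already handled by the classical Real orientation splitting, and does not require a slice-theoretic input.
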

\begin{proof}
We proceed by induction on $n$. Since $MU_\R^{\smsh(n-1)}$ is Real-oriented, we have
\[MU_\R^{\smsh n}=MU_\R^{\smsh (n-1)}[\ov{b_1},\ov{b_2},\ldots]=\bigvee\limits_{m\in M}S^{\frac{|m|}{2}\rho}\smsh MU_\R^{\smsh(n-1)}\]
where $M$ is a monomial basis of $\Z[b_1,b_2,\ldots]$. By the lemma, the canonical map
\[\bigvee\limits_{m\in M}S^{\frac{|m|}{2}\rho}\smsh MU_\R^{\smsh(n-1)}\to\prod\limits_{m\in M}S^{\frac{|m|}{2}\rho}\smsh MU_\R^{\smsh(n-1)}\]
is an equivalence, as $MU_\R^{\smsh(n-1)}\ge0$ and $S^{k\rho}\ge 2k$, so that $S^{k\rho}\smsh MU_\R^{\smsh(n-1)}\ge2k$ by (\cite{HHR}, 4.26).

This completes the proof, as the category of cofree $C_{2}$-spectra is closed under limits and smashing with a dualizable $C_2$-spectrum, hence the target is cofree.
\end{proof}
\subsection{Gluing maps and cofreeness}
We set up an inductive argument to prove \cref{1.1}. To fix notation, we use $\Phi^{C_{p^k}}$ to denote the functor $Sp^{C_{p^n}}\to Sp$ and $\widetilde{\Phi}^{C_{p^k}}$ to denote the functor $Sp^{C_{p^n}}\to Sp^{C_{p^{n-k}}}$, so that $i^{C_{p^{n-k}}}_e\circ\widetilde{\Phi}^{C_{p^{k}}}=\Phi^{C_{p^k}}$. Nikolaus and Scholze use a result of Hesselholt and Madsen (\cite{HM}, 2.1) along with their Tate orbit lemma, to show $\mathrm{(}$\cite{NS}$\mathrm{,\gap}$$\mathrm{Corollary\gap II.4.7)}$:
\begin{proposition}\label{2.7}
If $X\in Sp^{C_{p^n}}$ has the property that $\Phi^{C_{p^k}}X\in Sp$ is bounded below for all $0\le k< n$, there is a homotopy limit diagram
\[
\btz
X^{C_{p^n}}\arrow[rrrr]\arrow[dddd]&&&&\Phi^{C_{p^n}}X\arrow[d]\\
&&&(\widetilde{\Phi}^{C_{p^{n-1}}}X)^{hC_{p}}\arrow[r]\arrow[d]&(\widetilde{\Phi}^{C_{p^{n-1}}}X)^{tC_{p}}\\
&&(\widetilde{\Phi}^{C_{p^2}}X)^{hC_{p^{n-2}}}\arrow[r]\arrow[d]&\cdots\\
&\bg(\widetilde{\Phi}^{C_{p}}X\bg)^{hC_{p^{n-1}}}\arrow[r]\arrow[d]&\bg((\widetilde{\Phi}^{C_{p}}X)^{tC_{p}}\bg)^{hC_{p^{n-2}}}\\
X^{hC_{p^n}}\arrow[r]&\bg(X^{tC_{p}}\bg)^{hC_{p^{n-1}}}
\etz
\]
\end{proposition}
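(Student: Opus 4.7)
I would prove the proposition by induction on $n$. For the base case $n=1$, the diagram collapses to the classical isotropy-separation pullback square $X^{C_p} \simeq \Phi^{C_p}X \times_{X^{tC_p}} X^{hC_p}$, which holds with no boundedness hypothesis and is exactly the $n=1$ instance of the staircase.

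For the inductive step, I would apply the Hesselholt--Madsen isotropy-separation square (\cite{HM}, 2.1) to $X$ with respect to the bottom normal inclusion $C_p \triangleleft C_{p^n}$. This yields a pullback of $C_{p^{n-1}}$-spectra whose upper-right vertex is $\widetilde{\Phi}^{C_p}X$, whose bottom row consists of the $C_{p^{n-1}}$-spectra $X^{hC_p}$ and $X^{tC_p}$ (carrying the residual action), and whose upper-left corner has $C_{p^{n-1}}$-fixed points equal to $X^{C_{p^n}}$. Taking $C_{p^{n-1}}$-fixed points produces a pullback of spectra, and the task reduces to identifying each of the three other vertices with the appropriate piece of the staircase.

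The upper-right vertex $(\widetilde{\Phi}^{C_p}X)^{C_{p^{n-1}}}$ is handled by the inductive hypothesis applied to $\widetilde{\Phi}^{C_p}X$: the boundedness assumption persists because $\Phi^{C_{p^k}}(\widetilde{\Phi}^{C_p}X) = \Phi^{C_{p^{k+1}}}X$ is bounded below for $0 \le k < n-1$. Reindexing the resulting smaller staircase yields the ``upper diagonal'' of the desired diagram, with diagonal vertices $(\widetilde{\Phi}^{C_{p^j}}X)^{hC_{p^{n-j}}}$ for $j = 1, \ldots, n$ and Tate vertices $((\widetilde{\Phi}^{C_{p^j}}X)^{tC_p})^{hC_{p^{n-j-1}}}$ for $j \ge 1$. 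The lower-left vertex becomes $(X^{hC_p})^{C_{p^{n-1}}} = X^{hC_{p^n}}$ by transitivity of homotopy fixed points, since $X^{hC_p}$ is manifestly cofree as a $C_{p^{n-1}}$-spectrum. The lower-right vertex $(X^{tC_p})^{C_{p^{n-1}}}$ is identified with $(X^{tC_p})^{hC_{p^{n-1}}}$ via the Tate orbit lemma (\cite{NS}, I.2.1), which, under the boundedness hypothesis, forces $X^{tC_p}$ to be cofree as a $C_{p^{n-1}}$-spectrum.

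Assembling the inductive staircase for $\widetilde{\Phi}^{C_p}X$ with the pullback square obtained from isotropy separation reproduces the full staircase of the proposition and exhibits $X^{C_{p^n}}$ as its limit. The main technical obstacle is the iterated use of the Tate orbit lemma: one must verify that each Tate vertex $((\widetilde{\Phi}^{C_{p^j}}X)^{tC_p})^{hC_{p^{n-j-1}}}$ is correctly read as an honest homotopy-fixed-point construction rather than a mixed genuine/Borel one, which requires carefully propagating the bounded-below hypothesis from the iterated geometric fixed points $\Phi^{C_{p^k}}X$ down through the residual $C_{p^{n-k-1}}$-actions on the intermediate Tate constructions at every level of the induction.
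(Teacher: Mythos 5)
The paper does not actually prove this proposition; it is stated as a citation of Nikolaus--Scholze, Corollary II.4.7, with the note that their proof uses the Hesselholt--Madsen isotropy separation square together with the Tate orbit lemma. Your sketch essentially reconstructs that cited argument---inducting on $n$ via the Hesselholt--Madsen square along $C_p\triangleleft C_{p^n}$, applying the inductive hypothesis to $\widetilde{\Phi}^{C_p}X$ (with the correctly reindexed boundedness hypotheses), identifying the lower-left vertex by transitivity of fixed points, and using the Tate orbit lemma to recognize the Tate vertex as a purely Borel construction---so it follows the same route as the source the paper relies on rather than offering a different one.

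The one place you compress and where a gap could open, which to your credit you flag as the ``main technical obstacle,'' is the claim that the Tate orbit lemma ``forces $X^{tC_p}$ to be cofree as a $C_{p^{n-1}}$-spectrum.'' This is true under the hypotheses, but it is not a one-step consequence of \cite{NS}~I.2.1. Since $X^{hC_p}=F(EC_{p^n,+},X)^{C_p}$ is cofree for formal reasons, cofreeness of $X^{tC_p}$ reduces to cofreeness of the free $C_{p^{n-1}}$-spectrum $X_{hC_p}$; and showing that a bounded-below free cyclic $p$-group spectrum is cofree requires vanishing of \emph{all} the intermediate Tate constructions $(X_{hC_p})^{tC_{p^j}}$, which in turn needs the Tate orbit lemma applied at each level together with bookkeeping of bounded-below hypotheses on $(X_{hC_p})_{hC_{p^{j-1}}}$. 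That iterated bootstrapping is precisely where the bulk of the Nikolaus--Scholze argument in their \S II.4 lives, and a careful version of your induction would need to spell it out rather than invoke cofreeness of $X^{tC_p}$ in a single sentence.
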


\begin{theorem}\label{2.8} Let $Y$ be a bounded below $C_{p}$-spectrum.  If $Y^{\smsh p^k}$ is a cofree $C_p$-spectrum for all $0\le k<n$, then $N_{C_p}^{C_{p^n}}Y$ is cofree.
\end{theorem}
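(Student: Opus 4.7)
The plan is to proceed by induction on $n$. The base case $n=1$ is immediate since $N_{C_p}^{C_p} Y = Y$ and $Y^{\smsh p^0}$ is cofree by assumption. For the inductive step, set $X := N_{C_p}^{C_{p^n}} Y$; by \cref{2.1} we must verify that $X^H \to X^{hH}$ is an equivalence for every subgroup $H \le C_{p^n}$. I would treat proper subgroups and the top group $H = C_{p^n}$ separately.

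For the proper subgroups, the key identification is
\[
\mathrm{res}^{C_{p^n}}_{C_{p^{n-1}}} N_{C_p}^{C_{p^n}} Y \simeq N_{C_p}^{C_{p^{n-1}}}(Y^{\smsh p}),
\]
which follows from the monoidality of the norm together with the fact that $C_{p^{n-1}}$ acts on the $p^{n-1}$ cosets of $C_{p^n}/C_p$ with $p$ orbits of size $p^{n-2}$. The $C_p$-spectrum $Y^{\smsh p}$ satisfies the hypotheses of the theorem at level $n-1$, since $(Y^{\smsh p})^{\smsh p^k} = Y^{\smsh p^{k+1}}$ is cofree for $0 \le k < n-1$. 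The inductive hypothesis thus gives that $\mathrm{res}^{C_{p^n}}_{C_{p^{n-1}}} X$ is cofree as a $C_{p^{n-1}}$-spectrum, handling every proper subgroup of $C_{p^n}$.

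For the top group, I would apply \cref{2.7} to $X$. The boundedness hypothesis follows from the formula $\widetilde{\Phi}^{C_{p^k}} X \simeq N_e^{C_{p^{n-k}}}(\Phi^{C_p} Y)$ for $k \ge 1$ (a consequence of $\Phi^G \circ N_H^G = \Phi^H$ iterated across the subgroup tower), whose underlying spectrum $(\Phi^{C_p} Y)^{\smsh p^{n-k}}$ is bounded below since $\Phi^{C_p}$ preserves bounded belowness. The Nikolaus--Scholze staircase then expresses $X^{C_{p^n}}$ as a homotopy limit whose bottom-left vertex is $X^{hC_{p^n}}$ and whose top-right vertex is $\Phi^{C_{p^n}} X = \Phi^{C_p} Y$. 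The cofreeness of $\mathrm{res}^{C_{p^n}}_{C_{p^{n-1}}} X$ established above, combined with the Nikolaus--Scholze Tate orbit lemma applied at each level of the staircase to the norm-from-trivial spectra $N_e^{C_{p^{n-k}}}(\Phi^{C_p} Y)$, should force the intermediate Tate squares to be recognized as pullbacks along equivalences, collapsing the staircase so that the composite $X^{C_{p^n}} \to X^{hC_{p^n}}$ is an equivalence.

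The hard part is organizing this collapse of the staircase. Each intermediate term of the form $(\widetilde{\Phi}^{C_{p^k}} X)^{hC_{p^{n-k}}}$ and its Tate partner must be matched against the corresponding term in the staircase for the Borel completion of $X$, upgrading the underlying-spectrum identifications supplied by Step 1 to equivalences of genuine $C_{p^{n-k}}$-spectra. The bounded-below hypothesis on $Y$ is essential here, since it is precisely what enables repeated application of the Tate orbit lemma at each layer.
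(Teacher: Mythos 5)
Your setup is correct and matches the paper's: induction on $n$, reduction via $\mathrm{res}^{C_{p^n}}_{C_{p^{n-1}}} N_{C_p}^{C_{p^n}} Y \simeq N_{C_p}^{C_{p^{n-1}}}(Y^{\smsh p})$ for proper subgroups, then the Nikolaus--Scholze staircase from \cref{2.7} for the top group. You also correctly identify the boundedness input and the identification of the top-right vertex of the staircase as $\Phi^{C_p} Y$.

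However, the step you flag as ``the hard part'' is exactly where your argument has a genuine gap, and the tool you reach for --- the Tate orbit lemma --- is not the right one. The Tate orbit lemma says $(Z_{hC_p})^{tC_p} \simeq 0$ for bounded-below Borel $C_{p^2}$-spectra $Z$; it is used by Nikolaus--Scholze to \emph{derive} the staircase in \cref{2.7}, but it does not by itself make the short vertical maps in that staircase equivalences. What is actually needed is the following reduction: each short vertical map has the form $(f)^{hC_{p^{n-k}}}$ where $f\colon \widetilde{\Phi}^{C_{p^k}}X \to (\widetilde{\Phi}^{C_{p^{k-1}}}X)^{tC_p}$ is a map of Borel $C_{p^{n-k}}$-spectra, so it suffices that $f$ be an \emph{underlying} equivalence. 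The underlying map is the canonical map $\Phi^{C_p}(W) \to W^{tC_p}$ for $W = i^{C_{p^{n-k+1}}}_{C_p}\widetilde{\Phi}^{C_{p^{k-1}}}X$, so you must show $W$ is a cofree $C_p$-spectrum. For $k=1$ this is $Y^{\smsh p^{n-1}}$, cofree by hypothesis. For $k \ge 2$, using $\widetilde{\Phi}^{C_{p^k}}X \simeq N_e^{C_{p^{n-k}}}(\Phi^{C_p}Y)$, one finds $W \simeq N_e^{C_p}\big(\Phi^{C_p}(Y^{\smsh p^{n-k}})\big)$, and cofreeness of a spectrum of the form $N_e^{C_p}Z$ is \emph{precisely} the statement that the Tate diagonal $Z \to (N_e^{C_p}Z)^{tC_p}$ is an equivalence --- i.e.\ the Segal Conjecture for $C_p$. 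The paper then observes that $Z = \Phi^{C_p}(Y^{\smsh p^{n-k}}) \simeq (Y^{\smsh p^{n-k}})^{tC_p}$ is bounded below and already $p$-complete (using cofreeness of $Y^{\smsh p^{n-k}}$), so the $p$-adic equivalence of \cref{1.2} is an honest equivalence. Without invoking the Segal Conjecture, there is no mechanism to collapse the staircase; the cofreeness established for proper subgroups is not sufficient, because it only controls the $C_p$-spectra $Y^{\smsh p^{n-1}}$, not the geometric-fixed-point spectra $N_e^{C_p}\big(\Phi^{C_p}(Y^{\smsh p^{n-k}})\big)$ appearing at the higher levels of the staircase.
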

\begin{proof}
Set $X:=N_{C_p}^{C_{p^n}}Y$. We proceed by induction on $n$, with the base case $n=1$ being tautological. For all $1\le k<n$,
\[i^{C_{p^n}}_{C_{p^{n-k}}}X=N_{C_p}^{C_{p^{n-k}}}(Y^{\smsh p^{k}})\]
is cofree by induction, so it suffices to show the map $X^{C_{p^n}}\to X^{hC_{p^n}}$ is an equivalence. Since $Y$ is bounded below, so is $X$, and this map is an equivalence if all of the short vertical maps in \cref{2.7} are equivalences. Each such map is of the form
\[(f)^{hC_{p^{n-k}}}:\bg(\widetilde{\Phi}^{C_{p^k}}X\bg)^{hC_{p^{n-k}}}\to\bg((\widetilde{\Phi}^{C_{p^{k-1}}}X)^{tC_{p}}\bg)^{hC_{p^{n-k}}}\]
for $k>0$, which is induced by the map in $Sp^{C_{p^{n-k}}}$
\[f:\widetilde{\Phi}^{C_{p^k}}X\to(\widetilde{\Phi}^{C_{p^{k-1}}}X)^{tC_{p}}\]
It therefore suffices to show that $f$ is an equivalence of Borel $C_{p^{n-k}}$-spectra for all $k>0$, which by definition is simply an underlying equivalence. The underlying map is the natural map
\[\Phi^{C_p}\bg(i^{C_{p^{n-k+1}}}_{C_p}\tilde{\Phi}^{C_{p^{k-1}}}X\bg)\to \bg(i^{C_{p^{n-k+1}}}_{C_p}\tilde{\Phi}^{C_{p^{k-1}}}X\bg)^{tC_p}\]
so it suffices to show $i^{C_{p^{n-k+1}}}_{C_p}\tilde{\Phi}^{C_{p^{k-1}}}X$ is a cofree $C_p$-spectrum. When $k=1$, we have
\[i^{C_{p^{n-k+1}}}_{C_p}\tilde{\Phi}^{C_{p^{k-1}}}X\simeq Y^{\smsh p^{n-1}}\]
and for $k>1$, one has
\aln{i^{C_{p^{n-k+1}}}_{C_p}\tilde{\Phi}^{C_{p^{k-1}}}X\simeq i^{C_{p^{n-k+1}}}_{C_{p}}(N_e^{C_{p^{n-k+1}}}(\Phi^{C_p}Y))
\simeq N_e^{C_{p}}\bg(\Phi^{C_p}(Y^{\smsh p^{n-k}})\bg)}
using the identification $\widetilde{\Phi}^{C_{p^k}}X\simeq N_e^{C_{p^{n-k}}}(\Phi^{C_p}Y)$ (see \cite{MSZ}, Theorem 2.2). The $C_p$-spectrum $N_e^{C_{p}}\Phi^{C_p}(Y^{\smsh p^{n-k}})$ is cofree by the Segal Conjecture for $C_p$: since $Y^{\smsh p^{n-k}}$ is bounded below and cofree,
\[\Phi^{C_p}(Y^{\smsh p^{n-k}})\simeq (Y^{\smsh p^{n-k}})^{tC_p}\]
is bounded below and $p$-complete.
%
\end{proof}

\begin{remark}
This result has various converses. For example, if $Y$ is a bounded below $C_p$-spectrum, then $N_{C_p}^{C_{p^k}}Y$ is cofree for all $1\le k\le n$ if and only if $Y^{\smsh p^k}$ is a cofree $C_p$-spectrum for all $0\le k<n$. The other direction follows because if $N_{C_p}^{C_{p^{k+1}}}Y$ is cofree, then $Y^{\smsh p^k}=i^{C_{p^{k+1}}}_{C_p}N_{C_p}^{C_{p^{k+1}}}Y$ is also cofree.

If $Y$ is also a ring spectrum, then the direct converse of \cref{2.8} is true: $N_{C_p}^{C_{p^n}}Y$ is cofree if and only if $Y^{\smsh p^k}$ is a cofree $C_p$-spectrum for all $0\le k<n$. This follows because $Y^{\smsh p^k}$ is a retract of $Y^{\smsh p^{n-1}}=i^{C_{p^n}}_{C_p}N_{C_p}^{C_{p^n}}Y$ in this case.
\end{remark}

\begin{corollary}\label{2.10}
For all $n\ge1$, $MU^{((C_{2^n}))}$ is cofree, and similarly for $BP^{((C_{2^n}))}$.
\end{corollary}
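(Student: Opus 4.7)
The plan is to combine the three main ingredients developed earlier in this section: the Hu–Kriz theorem (the $n=1$ case of Theorem 1.1, asserting that $MU_\R$ and $BP_\R$ are cofree), Proposition 2.6 (cofreeness propagates to smash powers), and Theorem 2.8 (cofreeness of smash powers lifts through the norm).

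First I would set $Y = MU_\R$ (respectively $Y = BP_\R$), regarded as a $C_2$-spectrum. By the Hu–Kriz computation \cite{Hu}, $Y$ is cofree. Moreover $Y$ is bounded below (it is connective, even slice $\ge 0$), which is the hypothesis needed to feed $Y$ into Theorem 2.8.

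Next, I would apply Proposition 2.6 to upgrade cofreeness of $Y$ to cofreeness of every smash power $Y^{\smsh k}$ for $k \ge 1$. This gives, in particular, that $Y^{\smsh 2^k}$ is a cofree $C_2$-spectrum for every $0 \le k < n$, which is exactly the hypothesis of Theorem 2.8 with $p=2$.

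Finally, Theorem 2.8 applied to $Y$ (with $p=2$) yields that $N_{C_2}^{C_{2^n}} Y$ is cofree for all $n \ge 1$. Recalling the HHR notation $MU^{((C_{2^n}))} = N_{C_2}^{C_{2^n}} MU_\R$ and $BP^{((C_{2^n}))} = N_{C_2}^{C_{2^n}} BP_\R$, this is precisely the claim. There is no real obstacle here — the whole point of Theorem 2.8 was to reduce the problem to the $C_2$-statement, and Proposition 2.6 was designed so that the Hu–Kriz input feeds directly into its hypothesis. The $n=1$ base case is of course just Hu–Kriz itself.
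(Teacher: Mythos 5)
Your proof is correct and is exactly the paper's argument: combine the Hu--Kriz $n=1$ case, Proposition~\ref{2.6} to propagate cofreeness to smash powers, and Theorem~\ref{2.8} (with $p=2$) to conclude. The paper's proof is simply a more compressed statement of the same three-step reduction.
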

\begin{proof}
$MU_\R$ is bounded below, so this follows immediately from \cref{2.6}, the Hu-Kriz $n=1$ case, and the theorem.
\end{proof}
We have shown that the case $n=1$, due to Hu and Kriz, along with Lin's Theorem, implies that $MU^{((C_{2^n}))}$ is cofree for all $n\ge1$. The argument can be reversed to point to another proof of Lin's Theorem, namely: 
\begin{proposition}\label{2.11}
For any $n>1$, the cofreeness of $MU^{((C_{2^n}))}$ implies both Lin's Theorem and the $n=1$ case.
\end{proposition}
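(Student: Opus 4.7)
I would run the induction of \cref{2.8} in reverse.

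\emph{The $n=1$ case.} Restriction along $C_2\subset C_{2^n}$ gives the cofree $C_2$-spectrum $i^{C_{2^n}}_{C_2}MU^{((C_{2^n}))}\simeq MU_\R^{\smsh 2^{n-1}}$. Since $MU_\R$ is a $C_2$-ring spectrum, it is a retract via its unit and multiplication, hence cofree by \cref{2.3}.

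\emph{Lin's theorem.} By the Nikolaus-Scholze reduction noted in the introduction, \cref{1.2} for all bounded below spectra follows from the case $X=H\F_2$, which (since $N_e^{C_2}H\F_2$ is bounded below) is equivalent to cofreeness of $N_e^{C_2}H\F_2$ as a $C_2$-spectrum. Since $MO$ splits as a wedge of suspensions of $H\F_2$'s, $H\F_2$ is a retract of $MO$ as spectra; applying the functor $N_e^{C_2}$ exhibits $N_e^{C_2}H\F_2$ as a retract of $N_e^{C_2}MO$, so it suffices to show $N_e^{C_2}MO$ is cofree. By MSZ Theorem 2.2,
\[\tilde{\Phi}^{C_{2^{n-1}}}\bg(MU^{((C_{2^n}))}\bg)\simeq N_e^{C_2}(\Phi^{C_2}MU_\R)\simeq N_e^{C_2}MO,\]
so the proposition reduces to showing that $\tilde{\Phi}^{C_{2^{n-1}}}$ carries our cofree starting point to a cofree $C_2$-spectrum.

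The crux of the argument, and the main obstacle, is the lemma: if $Z\in Sp^{C_{2^m}}$ is cofree with bounded below underlying spectrum, then $\tilde{\Phi}^{C_2}Z$ is cofree in $Sp^{C_{2^{m-1}}}$. One checks, for each $0\le k\le m-1$, that the canonical map $(\tilde{\Phi}^{C_2}Z)^{C_{2^k}}\to(\tilde{\Phi}^{C_2}Z)^{hC_{2^k}}$ is an equivalence. For cofree $Z$, the source identifies with $\Phi^{C_{2^{k+1}}}Z\simeq Z^{tC_{2^{k+1}}}$ and the target with $(Z^{tC_2})^{hC_{2^k}}$; these agree by the Nikolaus-Scholze Tate orbit lemma applied to the bounded below Borel spectrum underlying $Z$. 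One then iterates this lemma $n-1$ times; bounded-below-ness propagates because, by MSZ, the intermediate spectra $\tilde{\Phi}^{C_{2^j}}MU^{((C_{2^n}))}\simeq N_e^{C_{2^{n-j}}}MO$ all have bounded below underlying spectrum $MO^{\smsh 2^{n-j}}$.
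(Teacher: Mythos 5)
Your argument is correct in substance, but it takes a genuinely different route from the paper. The paper restricts the hypothesis down to $C_4$ (getting cofreeness of $BP^{((C_4))}$ and of $BP_\R\smsh BP_\R$ by retracts, just as in your first paragraph) and then reads Lin's Theorem off the three-layer limit diagram of \cref{2.7}: the left vertical map is an equivalence by hypothesis, the middle one by the $n=1$ case, and so the remaining vertical map $\Phi^{C_4}BP^{((C_4))}\to(\tilde{\Phi}^{C_2}BP^{((C_4))})^{tC_2}$ --- which is the Tate diagonal $H\F_2\to(N_e^{C_2}H\F_2)^{tC_2}$ --- is forced to be an equivalence. You instead prove a reusable lemma (relative geometric fixed points of a cofree spectrum with bounded below underlying spectrum are again cofree, via the Tate orbit lemma in the form of (\cite{NS}, II.4.1)), iterate it down to $N_e^{C_2}MO$, and use Thom's splitting of $MO$ to retract onto $N_e^{C_2}H\F_2$. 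Your route treats all layers uniformly and does not need the $n=1$ case as an input to Lin's Theorem; the paper's route is shorter because \cref{2.7} has already packaged the Hesselholt--Madsen/Nikolaus--Scholze compatibility between the genuine-to-homotopy fixed point comparison maps and the canonical maps $X^{tC_{2^{k+1}}}\to(X^{tC_2})^{hC_{2^k}}$, a compatibility your lemma invokes implicitly when it says ``the canonical map ... identifies with ...''.

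One misstatement to repair in the crux lemma: for $k\ge1$ the source $(\tilde{\Phi}^{C_2}Z)^{C_{2^k}}$ is not $\Phi^{C_{2^{k+1}}}Z$, and moreover $\Phi^{C_{2^{k+1}}}Z\not\simeq Z^{tC_{2^{k+1}}}$ for cofree $Z$ when $2^{k+1}>2$ (geometric fixed points at a subgroup of non-prime order are a Tate-type construction relative to the family of proper subgroups, not the Borel one; the discrepancy is exactly what the extra layers of \cref{2.7} measure). What is true, and is all you need, is the composite identification: since categorical fixed points compose, $(\tilde{\Phi}^{C_2}Z)^{C_{2^k}}\simeq(\widetilde{EC_{2^m}}\smsh Z)^{C_{2^{k+1}}}$, and cofreeness of $Z$ identifies this with $(i^{C_{2^m}}_{C_{2^{k+1}}}Z)^{tC_{2^{k+1}}}$. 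With that correction the lemma and its iteration go through (your bounded-below bookkeeping via $\tilde{\Phi}^{C_{2^j}}MU^{((C_{2^n}))}\simeq N_e^{C_{2^{n-j}}}MO$ is right), as does the retract step; note only that retracts of cofree spectra are cofree because local objects are closed under retracts, which is not quite what \cref{2.3} says.
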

\begin{proof}
If for any $n>1$, $MU^{((C_{2^n}))}$ is cofree, then a smash power of $BP^{((C_4))}$ is cofree, and it follows that $BP^{((C_4))}$ is cofree, as a retract; similarly for $BP_\R$ and therefore for its smash powers by \cref{2.6}. In this case, the limit diagram in \cref{2.7} is as follows:
\[
\btz
(BP^{((C_4))})^{C_4}\arrow[dd]\arrow[rr]&&\Phi^{C_4}(BP^{((C_4))})\arrow[d]\\
&(\widetilde{\Phi}^{C_{2}}BP^{((C_4))})^{hC_{2}}\arrow[r]\arrow[d]&(\widetilde{\Phi}^{C_{2}}BP^{((C_4))})^{tC_{2}}\\
(BP^{((C_4))})^{hC_4}\arrow[r]&((BP^{((C_4))})^{tC_{2}})^{hC_{2}}
\etz
\]
The lefthand vertical arrow is an equivalence by assumption, and the middle arrow is an equivalence since $BP_\R\smsh BP_\R$ is cofree. We find that the righthand vertical map is an equivalence, and this is the Tate diagonal $H\F_2\to (N_e^{C_{2}}H\F_2)^{tC_{2}}$, which is an equivalence if and only if Lin's Theorem holds, by (\cite{NS}, III.1.7).
\end{proof}

\section{Localizations of Norms of Real Bordism Theory}\label{3}
In this section, we give a proof of \cref{1.1} that is independent of both Lin's Theorem and the Hu-Kriz $n=1$ case. Our strategy is to show that $BP^{((C_4))}$ is cofree by mimicking the argument sketched in the introduction to show that $BP_\R$ is cofree. By \cref{2.11}, this implies Lin's Theorem as well as the $n=1$ case of \cref{1.1}, which gives the cases $n>2$ by \cref{2.8}.

To construct hypercubes analogous to those for $BP_\R$, we need a family of elements in $\pi_\bigstar^{C_4} BP^{((C_4))}$ to play the role of the $\ov{v_i}$'s, and we need $BP^{((C_4))}$ to become cofree upon inverting these elements. Following the discussion in (\cite{HHR}, Section 6), in $\pi_*^u(BP^{((C_4))})=\pi_*(BP\smsh BP)$, there are classes $\{t_i\}_{i\ge 1}$ with the property that
\[\pi_*^u(BP^{((C_4))})=\Z_{(2)}[t_i,\g(t_i)\st i\ge 1]\]
as a $C_4$-algebra, where $\g$ is the generator of $C_4$ and $\g^2(t_i)=-t_i$. The restriction map
\[\pi_{*\rho_2}^{C_2}(BP^{((C_4))})\to\pi_{2*}^u(BP^{((C_4))})\]
is an isomorphism. Lifting the classes $t_i$ along this map, we have classes 
\[\ov{t_i}\in\pi_{(2^i-1)\p_2}^{C_2}(BP^{((C_4))})\]
and using the $C_4$-commutative ring structure on $MU^{((C_4))}_{(2)}$, this gives classes 
\[N_{C_2}^{C_4}(\ov{t_i})\in\pi_{(2^i-1)\p_4}^{C_4}(BP^{((C_4))})\]

Inverting these classes, we may form hypercubes whose limits
\[\tilde{L}_nBP^{((C_4))}:=\mathrm{holim}_{\{i_1,\ldots,i_j\}\in\mc P_0([n])}(BP^{((C_4))}[N_{C_2}^{C_4}(\ov{t_{i_1}}\cdots\ov{t_{i_j}})^{-1}])\]
are easily shown to be cofree. It suffices then to establish that the natural map
\[BP^{((C_4))}\to\mathrm{holim}_n(\tilde{L}_nBP^{((C_4))})\]
is an equivalence. We determine the slice towers of each of the vertices
\[BP^{((C_4))}_{i_1,\ldots,i_j}:=BP^{((C_4))}[N_{C_2}^{C_4}(\ov{t_{i_1}}\cdots\ov{t_{i_j}})^{-1}]\]
which determines a filtration on $\tilde{L}_nBP^{((C_4))}$. We therefore analyze the above map in the category of \ti{filtered} $C_4$-spectra. We show that the associated graded of the filtration on $\tilde{L}_nBP^{((C_4))}$ splits as
\[(H\un{\Z}_{(2)}\smsh\widehat{W})\oplus\mc X_{n}\]
where $(H\un{\Z}_{(2)}\smsh\widehat{W})$ is the associated graded of the slice filtration on $BP^{((C_4))}$, and the map $\mc X_n\to\mc X_{n-1}$ is null, from which the result follows.

In \cref{sec3.1}, we begin with some general results on hypercubes that will allow us to deduce the associated graded of the filtration on $\tilde{L}_nBP^{((C_4))}$. In \cref{sec3.2}, we show that the functor sending a $G$-spectrum to its slice tower commutes with filtered colimits, allowing us to easily deduce the slice tower of $BP^{((C_4))}_{i_1,\ldots,i_j}$ from that of $BP^{((C_4))}$. We finish in \cref{sec3.3} by showing the results of \cref{sec3.1} apply - on associated graded - to the hypercubes discussed above, completing the proof.
\subsection{Generalities on Hypercubes}\label{sec3.1} We give some general results on hypercubes that look like (summands of) our chromatic hypercubes for $BP^{((C_4))}$, on associated graded. In this section, we use the language of $\infty$-categories following \cite{HTT}; in particular, we work in the model of quasicategories, and use stable $\infty$-categories following \cite{HA}. For a discussion of cubical diagrams in the context of $\infty$-categories, see (\cite{HA}, Section 6) or \cite{ACB}.

We fix $\mc C$ a stable $\infty$-category. Let $[n]$ denote the totally ordered set $\{1,\ldots,n\}$, and for $T$ a totally ordered set, let $\mc P(T)$ denote its power set regarded as a poset under inclusion. Let $\mc P_0(T)$ denote the sub-poset $\mc P(T)\setminus\{\emptyset\}$.


\begin{definition}\label{3.1}  An $n$-cube $\mc{X}$ in $\mc C$ is a functor $\mc{X}:\mc P([n])\to\mc C$, and a partial $n$-cube is a functor $\mc P_0([n])\to\mc C$. We say an $n$-cube $\mc X$ is \ti{cartesian} if the map
\[\mc X(\emptyset)\to\mathrm{holim}_{T\in\mc P_0([n])}\mc X(T)\]
is an equivalence.
\end{definition}

\begin{construction}\label{3.2}
Let $\mc P$ be a poset and objects $C_T\in\mc C$ for $T\in\mc P$ given. Regarding $C:T\mapsto C_T$ as a functor from the discrete category $\mathrm{ob}\mc P$, we obtain a diagram $\mc X_C:\mc P\to\mc C$ via left Kan extension along the inclusion $\mathrm{ob}\mc P\to\mc P$. Concretely, 
\[\mc X_C(T)=\bigoplus\limits_{S\le T}C_S\]
and the maps in $\mc X_C$ are the canonical inclusions.
\end{construction}
\begin{definition}\label{3.3}
When $\mc P=\mc P_0([n])$, we say a partial $n$-cube $\mc X:\mc P_0([n])\to\mc C$ is \ti{built from disjoint split inclusions} if $\mc X$ is equivalent to some $\mc X_C$ as in \cref{3.2}. If $\mc X$ is a cartesian $n$-cube such that the corresponding partial $n$-cube is built from disjoint split inclusions, we say $\mc X$ is a cartesian $n$-cube built from disjoint split inclusions.
\end{definition}

To make this definition clearer, note that any partial $2$-cube built from disjoint split inclusions is equivalent to one of the form
\[
\btz
&C_2\arrow[d]\\
C_1\arrow[r]&C_1\oplus C_2\oplus C_{12}
\etz
\]
and any partial $3$-cube built from disjoint split inclusions is equivalent to one of the form
\[
\btz
&C_{3}\arrow[rr]\arrow[dd]&&C_{2}\oplus C_{3}\oplus C_{23}\arrow[dd]\\
&&C_{2}\arrow[ur]\\
&C_1\oplus C_{3}\oplus C_{13}\arrow[rr]&&C_1\oplus C_{2}\oplus C_{3}\oplus C_{12}\oplus C_{13}\oplus C_{23}\oplus C_{123}\\
C_1\arrow[ur]\arrow[rr]&&C_1\oplus C_{2}\oplus C_{12}\arrow[ur]\arrow[uu,leftarrow,crossing over]
\etz
\]
where the inclusions are the canonical ones. We want to identify the limit of a diagram of this form, and we use a result of Antolín-Camarena and Barthel on computing limits of cubical diagrams inductively:
\begin{proposition}\label{3.4} $\mathrm{(}$\cite{ACB}$\mathrm{, 2.4)}$ Let $\mc X:\mc P_0([n])\to\mc C$ be a partial $n$-cube in $\mc C$. One has a pullback square
\[
\btz
\mathrm{holim}_{S\in\mc P_0([n])}\mc X(S)\arrow[r]\arrow[d]&\mathrm{holim}_{S\in\mc P_0([n-1])}\mc X(S)\arrow[d]\\
\mc X(\{n\})\arrow[r]&\mathrm{holim}_{S\in\mc P_0([n-1])}\mc X(S\cup\{n\})
\etz
\]
\end{proposition}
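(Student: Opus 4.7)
My plan is to present $\mc P_0([n])$ as a pushout of simpler $\infty$-categories whose limits under $\mc X$ can be identified directly, so that the claimed pullback emerges formally from the fact that $\mathrm{holim}(-, \mc X)$ carries pushouts of indexing categories to pullbacks in $\mc C$. The relevant decomposition is based on the sieve-cosieve splitting of $\mc P_0([n])$ according to whether a subset contains the element $n$.

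Write $\mc K_+ := \{S \in \mc P_0([n]) : n \in S\}$ for the cosieve of subsets containing $n$, and $\mc K_+^+ := \mc K_+ \setminus \{\{n\}\}$. The map $S \mapsto S \setminus \{n\}$ gives isomorphisms $\mc K_+ \cong \mc P([n-1])$ and $\mc K_+^+ \cong \mc P_0([n-1])$, sending $\{n\} \leftrightarrow \emptyset$; in particular $\{n\}$ is initial in $\mc K_+$, so $\mathrm{holim}_{\mc K_+} \mc X \simeq \mc X(\{n\})$. The key claim is that $\mc P_0([n])$ is the pushout in $\mathrm{Cat}_\infty$ of the span
\[
\mc K_+ \longleftarrow \mc K_+^+ \hookrightarrow \mc P_0([n-1]) \times \Delta^1,
\]
where the left map is the inclusion and the right map sends $S \cup \{n\} \mapsto (S, 1)$, with the universal functor to $\mc P_0([n])$ sending $(S, 0) \mapsto S$ and $(S, 1) \mapsto S \cup \{n\}$. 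Granting this, applying $\mathrm{holim}(-, \mc X)$ turns the pushout into a pullback, and the three non-$L$ corners simplify: $\mathrm{holim}_{\mc K_+} \mc X = \mc X(\{n\})$ as above; $\mathrm{holim}_{\mc K_+^+} \mc X = \mathrm{holim}_{S \in \mc P_0([n-1])} \mc X(S \cup \{n\})$ by the isomorphism $\mc K_+^+ \cong \mc P_0([n-1])$; and $\mathrm{holim}_{\mc P_0([n-1]) \times \Delta^1} \mc X = \mathrm{holim}_{S \in \mc P_0([n-1])} \mc X(S)$, since $\{0\} \hookrightarrow \Delta^1$ is initial, making the $\Delta^1$-limit of the arrow $\mc X(S) \to \mc X(S \cup \{n\})$ equal to $\mc X(S)$. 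These identifications yield exactly the square stated in the proposition.

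The main obstacle is the verification of the pushout presentation of $\mc P_0([n])$ at the $\infty$-categorical level. Since all categories here are $1$-categorical posets and the maps in the span are monomorphisms, I expect it suffices to check the claim in ordinary $\mathrm{Cat}$, which reduces to a finite combinatorial verification: every morphism $S \subseteq T$ in $\mc P_0([n])$ is realized either in $\mc K_+$ (when $n \in S$), in $\mc P_0([n-1]) \times \{0\}$ (when $n \notin T$), or as $(S, 0) \to (T \setminus \{n\}, 1)$ in the cylinder (when $n \notin S$ and $n \in T$), and any two realizations of the same composite are identified by the gluing along $\mc K_+^+$, because the identification $(S, 1) \leftrightarrow S \cup \{n\}$ makes the routes through the cylinder and through $\mc K_+$ agree. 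The remaining care lies in ensuring the $1$-categorical pushout computes the $\infty$-categorical one, which is standard for pushouts of posets along inclusions of subposets.
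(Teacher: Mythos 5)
The paper does not actually prove this statement: it is cited from Antolín-Camarena and Barthel \cite{ACB}, so there is no internal proof to compare against. That said, your strategy — presenting $\mc P_0([n])$ as a pushout of indexing $\infty$-categories and using that $\mathrm{holim}(-,\mc X)$ sends such pushouts to pullbacks (because $\mathrm{Fun}(-,\mc C)$ carries colimits in $\mathrm{Cat}_\infty$ to limits, and one then applies the universal property of the limit) — is a legitimate route, and your three identifications of the outer corners (via the initial object $\{n\}\in\mc K_+$, the isomorphism $\mc K_+^+\cong\mc P_0([n-1])$, and the initial subcategory $\mc P_0([n-1])\times\{0\}$ of the cylinder) are all correct.

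There is, however, a genuine gap in the step you yourself flag at the end: the claim that the $1$-categorical pushout of posets along inclusions of subposets automatically computes the pushout in $\mathrm{Cat}_\infty$ is false. A standard counterexample is $\Delta^1\cup_{\partial\Delta^1}\Delta^1$: as a pushout of posets this is again $\Delta^1$, but in $\mathrm{Cat}_\infty$ it is the $\infty$-groupoid $S^1\simeq B\Z$. What actually saves your particular span is that both legs $\mc K_+^+\hookrightarrow\mc K_+$ and $\mc K_+^+\hookrightarrow\mc P_0([n-1])\times\Delta^1$ are inclusions of \emph{cosieves} (upward-closed full subposets). The quickest way to close the argument is to check directly that the pushout of nerves along these inclusions is already $N(\mc P_0([n]))$ on the nose: a $k$-simplex of $N(\mc P_0([n]))$ is a chain $S_0\subseteq\cdots\subseteq S_k$ of nonempty subsets of $[n]$; it lies in $N(\mc K_+)$ precisely when $n\in S_0$, it lies in the image of $N(\mc P_0([n-1])\times\Delta^1)$ precisely when $S_j\neq\{n\}$ for all $j$, these two cases cover all chains, and their intersection is exactly $N(\mc K_+^+)$. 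Thus the comparison map from the $\mathrm{sSet}$-pushout to $N(\mc P_0([n]))$ is a levelwise bijection; since the legs are monomorphisms, hence Joyal cofibrations, this strict pushout is a homotopy pushout, and being already a quasicategory it computes the pushout in $\mathrm{Cat}_\infty$. With that repaired, the rest of your argument goes through.
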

\begin{proposition}\label{3.5}
Let $\mc X$ be a partial $n$-cube in $\mc C$ built from disjoint split inclusions with respect to some choice of objects $\{C_T\}_{T\in\mc P_0([n])}$ as in \cref{3.2}. Then $\mc X$ satisfies
\begin{enumerate}
\item $\mathrm{holim}_{S\in\mc P_0([n])}\mc X(S)\simeq \Omega^{n-1}C_{\{1,\ldots,n\}}$
\item The map
\[\mathrm{holim}_{S\in\mc P_0([n])}\mc X(S)\to \mathrm{holim}_{S\in\mc P_0([n-1])}\mc X(S)\]
is nullhomotopic.
\end{enumerate}
\end{proposition}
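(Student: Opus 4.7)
The plan is to induct on $n$, combining the Antol\'{i}n-Camarena--Barthel pullback square of \cref{3.4} with a recursive decomposition of the ``shifted'' restricted cube. The base case $n=1$ is immediate: $\mathrm{holim}_{\mc P_0([1])} \mc X = \mc X(\{1\}) = C_{\{1\}} = \Omega^0 C_{\{1\}}$, and part (2) is vacuous since $\mathrm{holim}_{\mc P_0([0])} \mc X$ is the terminal object.

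For the inductive step, I would first identify the three ``known'' corners of the pullback square from \cref{3.4}. The restricted partial $(n-1)$-cube $\mc X|_{\mc P_0([n-1])}$ is again built from disjoint split inclusions with the same constituent objects $\{C_T\}_{T \in \mc P_0([n-1])}$, so by induction its holim is $\Omega^{n-2} C_{[n-1]}$. The more interesting piece is the partial $(n-1)$-cube $\mc Y : S \mapsto \mc X(S \cup \{n\})$. Splitting each summand $C_T$ of $\mc X(S \cup \{n\}) = \bigoplus_{\emptyset \ne T \subseteq S \cup \{n\}} C_T$ according to whether $n \in T$ yields a canonical decomposition
\[
\mc Y \simeq \mc X|_{\mc P_0([n-1])} \oplus \un{C_{\{n\}}} \oplus \mc X',
\]
where $\un{C_{\{n\}}}$ is the constant functor and $\mc X'$ is the partial $(n-1)$-cube built from disjoint split inclusions associated to $C'_T := C_{T \cup \{n\}}$ for $T \in \mc P_0([n-1])$. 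Applying induction to $\mc X'$ yields
\[
\mathrm{holim}_{\mc P_0([n-1])} \mc Y \simeq \Omega^{n-2} C_{[n-1]} \oplus C_{\{n\}} \oplus \Omega^{n-2} C_{\{1,\ldots,n\}},
\]
and a direct check from \cref{3.2} shows that the maps into this sum from the other two corners of the square are the coordinate inclusions of the first and second summands, respectively.

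With this identification, the pullback square of \cref{3.4} computes the fiber of the split inclusion of the first two summands into
\[
\Omega^{n-2} C_{[n-1]} \oplus C_{\{n\}} \oplus \Omega^{n-2} C_{\{1,\ldots,n\}}.
\]
Its cofiber is $\Omega^{n-2} C_{\{1,\ldots,n\}}$, so its fiber is $\Omega^{n-1} C_{\{1,\ldots,n\}}$, proving (1). Part (2) drops out of the same diagram: the top horizontal arrow is the composite of the canonical map $\mathrm{holim}_{\mc P_0([n])} \mc X \to \Omega^{n-2} C_{[n-1]} \oplus C_{\{n\}}$ with projection to the first factor, and since the cofiber sequence above is split, its connecting map $\Omega^{n-1} C_{\{1,\ldots,n\}} \to \Omega^{n-2} C_{[n-1]} \oplus C_{\{n\}}$ is null; hence so is the composition. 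The only step requiring real care is bookkeeping the decomposition of $\mc Y$ and verifying naturality in $S$, but this is a direct inspection using \cref{3.2}; everything else is formal manipulation of fiber sequences in a stable $\infty$-category.
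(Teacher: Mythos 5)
Your argument is correct and follows essentially the same route as the paper: both proceed by induction on $n$ using the Antol\'{i}n-Camarena--Barthel pullback square of \cref{3.4}, and both rest on the observation that the shifted $(n-1)$-cube $S \mapsto \mc X(S\cup\{n\})$ decomposes into split summands. The paper groups your summands $\mc X|_{\mc P_0([n-1])}$ and $\mc X'$ together into a single cube $\mc Z$ built from the objects $\{C_T\oplus C_{T\cup\{n\}}\}$, applies the inductive hypothesis, and then observes that the resulting pullback square is itself a cartesian $2$-cube built from disjoint split inclusions, so the problem formally reduces to $n=2$; that case is settled by exhibiting a morphism of partial $2$-cubes and invoking (\cite{ACB}, 2.2). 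You instead split $\mc Y$ one step further and compute the pullback directly as the fiber of the split inclusion $\Omega^{n-2}C_{[n-1]}\oplus C_{\{n\}}\hookrightarrow\Omega^{n-2}C_{[n-1]}\oplus C_{\{n\}}\oplus\Omega^{n-2}C_{[n]}$, with (2) following because the connecting map of a split cofiber sequence is null. These differences are cosmetic; the decomposition and the inductive structure are the same.
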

\begin{proof} We proceed by induction on $n$. For $n=1$, a cartesian $1$-cube is an equivalence
\[\mathrm{holim}_{S\in\mc P_0([1])}\mc X(S)\x{\simeq}\mc X(\{1\})\]
and the map in (2) is the map to the terminal object. It is straightforward to show that the partial $(n-1)$-cube
\[\mc P_0([n-1])\to\mc P_0([n])\x{\mc X}\mc C\]
is built from disjoint split inclusions, and
\[\mc P_0([n-1])\x{-\cup\{n\}}\mc P_0([n])\x{\mc X}\mc C\]
is of the form $C_{\{n\}}\oplus\mc Z$ where $\mc Z$ is a partial $(n-1)$-cube built from disjoint split inclusions using the objects $\{C_T\oplus C_{T\cup\{n\}}\}_{T\in\mc P_0([n-1])}$, as in \cref{3.2}. By induction, \cref{3.4} gives a pullback square
\[
\btz
\mathrm{holim}_{S\in\mc P_0([n])}\mc X(S)\arrow[r]\arrow[d]&\Omega^{n-2}C_{\{1,\ldots,n-1\}}\arrow[d]\\
C_{\{n\}}\arrow[r]&C_{\{n\}}\oplus \Omega^{n-2}C_{\{1,\ldots,n-1\}}\oplus\Omega^{n-2}C_{\{1,\ldots,n\}}
\etz
\]
which is a cartesian $2$-cube built from disjoint split inclusions. It therefore suffices to prove the proposition in the case $n=2$, which is the claim that for objects $C_1,C_2,C_{12}\in\mc C$, there is a pullback square of the form
\[
\btz
\Omega C_{12}\arrow[r,"0"]\arrow[d,"0"']&C_2\arrow[d]\\
C_1\arrow[r]&C_1\oplus C_2\oplus C_{12}
\etz
\]
One may form a morphism of partial 2-cubes
\[
\btz
&*\arrow[d]\\
*\arrow[r]&C_{12}
\etz
\implies
\btz
&C_2\arrow[d]\\
C_1\arrow[r]&C_1\oplus C_2\oplus C_{12}
\etz
\]
via naturality of \cref{3.2} which, taking limits, constructs such a square. Taking fibers along the vertical maps, one has the identity map of $\Omega C_1\oplus\Omega C_{12}$; the square is therefore cartesian by (\cite{ACB}, 2.2).
\end{proof}

\subsection{Slice Towers and Chromatic Localizations}\label{sec3.2}
In this section, we use the slice filtration to work in the $\infty$-category $\mathrm{Fun}(\Z^{op},Sp^G)$ of filtered $G$-spectra (see \cite{HA}, 1.2.2). We refer to \cite{Dylan} for a treatment of the slice filtration in an $\infty$-categorical context. Let
\[\mc T:Sp^G\to\mathrm{Fun}(\Z^{op},Sp^G)\]
be the functor which associates to a $G$-spectrum its slice tower, which may be obtained as in (\cite{HA}, 1.2.1.17). We use the following notation in this context:
\begin{itemize}
\item $\widetilde{P}^k:\tx{Fun}(\Z^{op},Sp^{C_4})\x{\tx{ev}_k}Sp^{C_4}$
\item $\widetilde{P}^k_k=\tx{fib}(\widetilde{P}^k\to \widetilde{P}^{k-1})$
\item $P^k=\tilde{P}^k\circ\mc T$
\item $P^k_k=\tilde{P}^k_k\circ\mc T$
\item $\mathrm{holim}:\mathrm{Fun}(\Z^{op},Sp^{C_4})\to Sp^{C_4}$ is the functor sending a tower to its homotopy limit.
\end{itemize}
We use extensively that limits and colimits are computed pointwise in functor categories. We begin with a useful lemma:
\begin{lemma}\label{3.6}
The functor $\mc T:Sp^G\to\mathrm{Fun}(\Z^{op},Sp^G)$ commutes with filtered colimits.
\end{lemma}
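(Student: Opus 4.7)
The plan is to reduce to a pointwise check and then exploit the compactness of slice cells in $Sp^G$. Since limits and colimits in $\mathrm{Fun}(\Z^{op},Sp^G)$ are computed pointwise, it suffices to show that for each $k\in\Z$ the functor $P^k:Sp^G\to Sp^G$ preserves filtered colimits.

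I would use the universal characterization of the slice tower from \cite{HHR}: the canonical map $X\to P^kX$ is, up to equivalence, the unique map whose target is slice $\leq k$ and whose fiber $P_{k+1}X$ is slice $\geq k+1$. Two closure statements then carry the argument. First, the subcategory of slice $\leq k$ $G$-spectra is closed under filtered colimits: by definition, $Y$ is slice $\leq k$ iff $[\hat S,Y]^G=0$ for every slice cell $\hat S$ of dimension $>k$, and each such slice cell $G_+\smsh_H S^{m\rho_H-\epsilon}$ is a finite $G$-CW spectrum and hence compact, so this orthogonality is preserved under filtered colimits. Second, the subcategory of slice $\geq k+1$ $G$-spectra is closed under arbitrary homotopy colimits, because it is by construction the localizing subcategory generated by the slice cells of dimension $\geq k+1$.

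Given a filtered system $\{X_\alpha\}$ with colimit $X$, I form the canonical map $X\to\colim_\alpha P^kX_\alpha$. By the first closure fact its target is slice $\leq k$; by stability its fiber is $\colim_\alpha P_{k+1}X_\alpha$, which is slice $\geq k+1$ by the second. The universal property then identifies this map with $X\to P^kX$, so $P^kX\simeq\colim_\alpha P^kX_\alpha$ naturally in $k$, which is exactly the claim that $\mc T$ preserves filtered colimits. The only non-formal input is the compactness of the slice cells, which is immediate from their construction as finite $G$-CW spectra; everything else is bookkeeping with the defining properties of the slice filtration, so I do not expect any real obstacle beyond citing the correct characterization from \cite{HHR}.
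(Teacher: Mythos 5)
Your proof takes essentially the same route as the paper: reduce to a pointwise claim about each $P^k$, form the cofiber sequence $\colim_i P_{k+1}(X_i)\to X\to \colim_i P^k(X_i)$, and apply the slice recognition principle, using that $\tau_{\geq k+1}$ is a localizing subcategory (so closed under all homotopy colimits) and that slice cells are compact (so slice $\leq k$ is preserved by filtered colimits). The only cosmetic difference is that you spell out the orthogonality characterization of slice $\leq k$ and the finite-CW reason for compactness, where the paper simply cites these facts.
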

\begin{proof}
Let 
\[X_1\to X_2\to X_3\to\cdots\]
be an ind-system of $G$-spectra with colimit $X$. For all $k\in\Z$, we have a map of cofiber sequences
\[
\btz
\colim_i P_{k+1}(X_i)\arrow[d]\arrow[r]&\colim_i X_i\arrow[d,"\simeq"]\arrow[r]&\colim_i P^k(X_i)\arrow[d]\\
P_{k+1}(X)\arrow[r]&X\arrow[r]&P^k(X)
\etz
\]
By the slice recognition principle (\cite{HHR}, 4.16), the left and right arrows are equivalences provided that $\colim_i P_{k+1}(X_i)$ is slice $>k$ and $\colim_i P^k(X_i)$ is slice $\le k$. The former follows from the fact that the subcategory
\[\tau_{>k}=\{Y\in Sp^G\st Y>k\}\]
is a localizing subcategory by definition, and the latter follows from the fact that slice spheres are compact.

The lemma now follows from the fact that equivalences in functor categories are detected pointwise.
\end{proof}

\cref{3.6} allows us to easily determine the slice tower of the $C_4$-spectrum $BP^{((C_4))}_{i_1,\ldots,i_j}$. We define $n$ so that $-n\rho_4=|N_{C_2}^{C_4}(\ov{t_{i_1}}\cdots\ov{t_{i_j}})^{-1}|$, and
\[BP^{((C_4))}_{i_1,\ldots,i_j}=\colim\bg(BP^{((C_4))}\x{ N_{C_2}^{C_4}(\ov{t_{i_1}}\cdots\ov{t_{i_j}})\cdot}\Sigma^{n\p_4}BP^{((C_4))}\x{N_{C_2}^{C_4}(\ov{t_{i_1}}\cdots\ov{t_{i_j}})\cdot}\cdots\bg)\]
\begin{proposition}\label{3.7}
There is an equivalence of filtered $C_4$-spectra
\[\mc T(BP^{((C_4))}_{i_1,\ldots,i_j})\simeq \mathrm{colim}_k\mc T(\Sigma^{kn\p_4}BP^{((C_4))})\]
In particular, the localization
\[BP^{((C_4))}\to BP^{((C_4))}_{i_1,\ldots,i_j}\]
induces the corresponding localization
\[H\un{\Z}_{(2)}\smsh S^0[C_4\cdot \ov{t_1},C_4\cdot\ov{t_2},\ldots]\to H\un{\Z}_{(2)}\smsh S^0[C_4\cdot \ov{t_1},C_4\cdot\ov{t_2},\ldots][C_4\cdot (\ov{t_{i_1}}\cdots \ov{t_{i_j}})^{-1}]\]
on slice associated-graded. The notation is as in \cite{HHR}, where 
\[S^0[C_4\cdot \ov{t_1},C_4\cdot\ov{t_2},\ldots][C_4\cdot (\ov{t_{i_1}}\cdots \ov{t_{i_j}})^{-1}]=N_{C_2}^{C_4}(S^0[\ov{t_1},\ov{t_2},\ldots][(\ov{t_{i_1}}\cdots\ov{t_{i_j}})^{-1}])\]
\end{proposition}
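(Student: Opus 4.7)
The plan is to reduce the entire statement to Lemma 3.6 applied to a telescopic presentation of the chromatic localization. Specifically, the class $N_{C_2}^{C_4}(\ov{t_{i_1}}\cdots\ov{t_{i_j}})\in\pi_{-n\p_4}^{C_4}BP^{((C_4))}$ presents $BP^{((C_4))}_{i_1,\ldots,i_j}$ as the sequential colimit of
\[BP^{((C_4))}\to \Sigma^{n\p_4}BP^{((C_4))}\to \Sigma^{2n\p_4}BP^{((C_4))}\to\cdots\]
with transition maps given by multiplication by the norm class. Since this is a filtered colimit in $Sp^{C_4}$, Lemma \ref{3.6} applied term-by-term immediately yields the first equivalence in $\mathrm{Fun}(\Z^{op},Sp^{C_4})$.

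For the second assertion, I would apply the slice-summand functors $\widetilde{P}^k_k$ pointwise. Each $\widetilde{P}^k_k$ commutes with filtered colimits: $\widetilde{P}^k$ is evaluation at $k$, and $\widetilde{P}^k_k$ is the fiber of a natural transformation between such evaluations in the stable $\infty$-category $\mathrm{Fun}(\Z^{op},Sp^{C_4})$, both of which preserve filtered colimits. Composing with the previous step gives, for every $k$,
\[P^k_k\bigl(BP^{((C_4))}_{i_1,\ldots,i_j}\bigr)\simeq \colim_\ell P^k_k\bigl(\Sigma^{\ell n\p_4}BP^{((C_4))}\bigr).\]

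By the HHR Slice Theorem, the total associated graded of the slice tower of $BP^{((C_4))}$ is $H\un{\Z}_{(2)}\smsh S^0[C_4\cdot\ov{t_1},C_4\cdot\ov{t_2},\ldots]$, and under this identification the classes $\ov{t_i}$ lift to permanent cycles detecting the corresponding polynomial generators. Since multiplication by $N_{C_2}^{C_4}(\ov{t_{i_1}}\cdots\ov{t_{i_j}})$ is a self-map of the $C_4$-ring spectrum $BP^{((C_4))}$, it refines through the multiplicative slice tower and acts on the associated graded by multiplication by the corresponding monomial in $N_{C_2}^{C_4}(S^0[\ov{t_1},\ov{t_2},\ldots])=S^0[C_4\cdot\ov{t_1},C_4\cdot\ov{t_2},\ldots]$ (smashed with $H\un{\Z}_{(2)}$). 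Passing to the colimit of the telescope therefore inverts exactly this class, producing the stated localization.

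The step that requires the most care is verifying that multiplication by the norm class lifts to a self-map of the filtered $C_4$-spectrum $\mc T(BP^{((C_4))})$ that acts on associated graded by the expected monomial. This is a consequence of the multiplicative structure of the slice filtration for a $C_4$-ring spectrum together with the explicit detection provided by the HHR Slice Theorem; once this is in place, everything else is a formal consequence of Lemma \ref{3.6} and the fact that slice-summand functors commute with filtered colimits.
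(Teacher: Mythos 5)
Your proposal follows the same overall strategy as the paper for the first equivalence: apply Lemma 3.6 to the telescope presenting the localization. For the second claim (identifying the slice associated graded), you diverge from the paper's proof in a way that leaves a small but real gap.

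The paper identifies the individual terms $P^l_l(\Sigma^{kn\rho_4}BP^{((C_4))})$ directly via HHR Corollary 4.25, which gives the shift formula
\[P^l_l(\Sigma^{kn\rho_4}BP^{((C_4))})\simeq\Sigma^{kn\rho_4}P^{l-4kn}_{l-4kn}BP^{((C_4))},\]
so combined with the HHR Slice Theorem the colimit of slices is identified outright. You instead appeal to the multiplicativity of the slice tower to identify the \emph{maps} in the directed system on associated graded, but you never say what the individual terms $P^k_k(\Sigma^{\ell n\rho_4}BP^{((C_4))})$ actually are; a colimit is determined by its objects together with its maps, so asserting what the maps do without first pinning down the targets is circular. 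The phrase ``refines through the multiplicative slice tower'' also glosses over what is genuinely being used: multiplication by $N_{C_2}^{C_4}(\ov{t_{i_1}}\cdots\ov{t_{i_j}})$ is not a self-map of $BP^{((C_4))}$ but a map $BP^{((C_4))}\to\Sigma^{n\rho_4}BP^{((C_4))}$, so applying $\mc T$ gives a map of slice towers whose target still needs to be named. Your multiplicativity argument can be made to work (the slice filtration of a commutative $G$-ring spectrum is multiplicative in the sense of HHR 4.26, and the norm class is detected in the expected slice degree), but it requires more setup than you supply, and the paper's route via HHR 4.25 is both shorter and avoids invoking multiplicativity altogether. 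Citing HHR 4.25 for the terms, and then observing that the transition maps are exactly multiplication by the corresponding monomial under that identification, would close the gap.
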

\begin{proof}
The first claim follows immediately from \cref{3.6}. The description of the slice associated graded of $BP^{((C_4))}_{i_1,\ldots,i_j}$ follows from the HHR Slice Theorem and (\cite{HHR}, Corollary 4.25), which implies that 
\[P^{l}_l(\Sigma^{kn\p_4}BP^{((C_4))})\simeq \Sigma^{kn\p_4}P^{l-4kn}_{l-4kn}BP^{((C_4))}\]
\end{proof}
\begin{remark}\label{3.8}
The previous proposition should be interpreted as follows: the slice tower for $BP^{((C_4))}_{i_1,\ldots,i_j}$ forgets to the ordinary Postnikov tower of $i^{C_4}_eBP^{((C_4))}_{i_1,\ldots,i_j}$, which has $P^{2d-1}_{2d-1}\simeq *$ and
\[P^{2d}_{2d}\simeq H\Z_{(2)}\smsh W_{2d}\]
where $W_{2d}$ is a wedge of $S^{2d}$'s over the set of monomials of degree $2d$ in
\[\pi_*^u(BP^{((C_4))}_{i_1,\ldots,i_j})=\Z_{(2)}[t_i,\g(t_i)\st i\ge 1][(t_{i_1}\cdots t_{i_j}\g(t_{i_1})\cdots\g(t_{i_j}))^{-1}]\]
The slice tower is an equivariant refinement of this wherein the odd slices vanish, $H\Z_{(2)}$ is replaced with $H\un{\Z}_{(2)}$, the spheres in $W_{2d}$ corresponding to a summand of the above $C_4$-module with stabilizer $C_2$ are grouped with their conjugates in a  
\[{C_4}_+\smsh_{C_2}S^{d\p_2},\]
the spheres corresponding to a $C_4$-fixed summand are replaced with $S^{\frac{d}{2}\p_4}$, and there are no free summands. We define $\widehat{W}^{i_1,\ldots,i_j}_{2d}$ and $\widehat{W}_{2d}$ by
\[P^{2d}_{2d}(BP^{((C_4))}_{i_1,\ldots,i_j})=H\un{\Z}_{(2)}\smsh\widehat{W}^{i_1,\ldots,i_j}_{2d}\]
\[P^{2d}_{2d}(BP^{((C_4))})=H\un{\Z}_{(2)}\smsh\widehat{W}_{2d}\]
\end{remark}

\subsection{Proof of Theorem 1.1} \label{sec3.3}
We introduce the chromatic $n$-cubes we need to prove \cref{1.1} and show they split as a summand that is constant in $n$ and a cartesian $n$-cube built from disjoint split inclusions. 
\begin{definition}\label{3.9}
Consider the following hypercubes:
\begin{enumerate}
\item Let $\mc H_n$ be the cartesian $n$-cube so that for $\{i_1,\ldots,i_j\}\in\mc P_0([n])$
\[\mc H_n(\{i_1,\ldots,i_j\})=BP^{((C_4))}_{i_1,\ldots,i_j}\]
One may form this cube inductively by working in the category of $MU^{((C_4))}_{(2)}$-modules and applying the functors $(-)[N(\ov{t_i})^{-1}]$. See (\cite{ACB}, 3.1) for a similar construction.
\item Let $\mc S_{n,d}$ be the cartesian $n$-cube defined on $\mc P_0([n])$ by 
\[\mc S_{n,d}:\mc P_0([n])\x{\mc H_n}Sp^{C_4}\x{P^{2d}_{2d}}Sp^{C_4}\]
\end{enumerate}
\end{definition}

With notation as in \cref{3.8}, we note that $\widehat{W}^{i_1,\ldots,i_j}_{2d}$ has $\widehat{W}_{2d}$ as a split summand for any $\{i_1,\ldots,i_j\}$, corresponding to the split inclusion
\[\pi_{2d}^u(BP^{((C_4))})\inj\pi_{2d}^u(BP^{((C_4))}_{i_1,\ldots,i_j})\]
This splitting is natural in $\{i_1,\ldots,i_j\}$, so we see that there is a splitting
\[\mc S_{n,d}\simeq (H\un{\Z}_{(2)}\smsh \widehat{W}_{2d})\oplus\mc X_{n,d}\]
where $\mc X_{n,d}$ is a cartesian $n$-cube satisfying
\[\mc X_{n,d}(\{i_1,\ldots,i_j\})=H\un{\Z}_{(2)}\smsh (\widehat{W}^{i_1,\ldots,i_j}_{2d}/\widehat{W}_{2d})\]
We have the following connection to the generalities in \cref{sec3.1}:
\begin{proposition}\label{3.10}
The cube $\mc X_{n,d}$ is a cartesian $n$-cube built from disjoint split inclusions.
\end{proposition}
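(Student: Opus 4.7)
The plan is to exhibit, for each nonempty $T \in \mc P_0([n])$, an explicit object $C^d_T \in Sp^{C_4}$ together with natural equivalences $\mc X_{n,d}(S) \simeq \bigoplus_{T \subseteq S,\, T \neq \emptyset} C^d_T$ making the partial cube $\mc X_{n,d}|_{\mc P_0([n])}$ coincide with $\mc X_{C^d}$ from \cref{3.2}. Since $\mc X_{n,d}$ is already cartesian by construction in \cref{3.9}, this will suffice by \cref{3.3}.

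By \cref{3.7} and \cref{3.8}, the slice summand $\widehat{W}^{i_1,\ldots,i_j}_{2d}$ is a wedge of representation spheres of type $S^{(d/2)\p_4}$ or ${C_4}_+ \smsh_{C_2} S^{d \p_2}$ indexed by $C_4$-orbits of degree-$2d$ monomials in
\[\pi_*^u(BP^{((C_4))}_{i_1,\ldots,i_j}) = \Z_{(2)}[t_i,\g(t_i) : i \ge 1]\bigl[(t_{i_1}\cdots t_{i_j}\g(t_{i_1})\cdots\g(t_{i_j}))^{-1}\bigr].\]
For a monomial $m = \prod_i t_i^{a_i} \g(t_i)^{b_i}$, I define its \emph{type}
\[T(m) := \{k \in S : a_k < 0 \text{ or } b_k < 0\}.\]
Since the generator $\g \in C_4$ acts on exponent pairs by $(a_k, b_k) \mapsto (b_k, a_k)$ (up to a sign on the scalar), the type is constant on $C_4$-orbits.

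The crucial observation is that the set of type-$T$ monomials in degree $2d$ is independent of $S$: for $k \notin T$ the exponents $(a_k, b_k)$ are nonnegative by definition of the type, so the containment condition $T \subseteq S$ is the only one governing whether such monomials appear in $\pi_*^u(BP^{((C_4))}_S)$. Letting $V^T_{2d}$ denote the wedge of representation spheres corresponding to $C_4$-orbits of type-$T$ monomials in degree $2d$, the partition of orbits by type gives a $C_4$-equivariant splitting
\[\widehat{W}^S_{2d} \simeq \bigvee_{T \subseteq S} V^T_{2d}, \qquad V^{\emptyset}_{2d} = \widehat{W}_{2d},\]
and the localization maps $BP^{((C_4))}_S \to BP^{((C_4))}_{S'}$ act on slice associated graded as the monomial inclusion of \cref{3.7}, carrying each $V^T_{2d}$-summand identically into the $V^T_{2d}$-summand of $\widehat{W}^{S'}_{2d}$. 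Setting $C^d_T := H\un{\Z}_{(2)} \smsh V^T_{2d}$ and quotienting by $\widehat{W}_{2d}$ then yields
\[\mc X_{n,d}(S) \simeq \bigoplus_{T \subseteq S,\, T \neq \emptyset} C^d_T\]
with the cube maps identified with the canonical inclusions of direct summands, realizing the partial cube as $\mc X_{C^d}$.

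The main obstacle will be promoting the monomial type partition, visible at the level of the underlying ring, to a genuine splitting of the $C_4$-equivariant spectrum $\widehat{W}^S_{2d}$ as a wedge of representation spheres. This should follow directly from \cref{3.7}: the slice associated graded is the localized norm polynomial algebra $H\un{\Z}_{(2)} \smsh N_{C_2}^{C_4}(S^0[\ov{t_i}][(\ov{t_{i_1}}\cdots\ov{t_{i_j}})^{-1}])$, whose wedge summands correspond bijectively to $C_4$-orbits of monomials, and whose $C_4$-equivariant structure on each orbit is forced (to be either ${C_4}_+ \smsh_{C_2} S^{d\p_2}$ or $S^{(d/2)\p_4}$) by whether the orbit has trivial $\g$-stabilizer. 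Since the type partition of monomials is $C_4$-stable, it refines the orbit partition and hence refines the wedge decomposition into a $C_4$-equivariant coproduct of the $V^T_{2d}$'s, as required.
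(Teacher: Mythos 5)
Your proof takes essentially the same approach as the paper's: you identify the summand $C^d_T$ with the wedge of representation spheres indexed by $C_4$-orbits of degree-$2d$ monomials of type exactly $T$, which is precisely the decomposition the paper writes down (the paper's ``$(t_{i_1}\cdots\g(t_{i_j}))^{<0}$'' summand is its shorthand for your type-$S$ piece), and you invoke \cref{3.7} and \cref{3.8} in the same way to promote the monomial bookkeeping to a $C_4$-equivariant splitting of the slice associated graded. Your ``type'' formulation makes the combinatorics a bit more explicit, but there is no substantive difference in the argument.
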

\begin{proof}
$\mc X_{n,d}$ is cartesian by definition. The result - and the terminology - follows from the fact that for any $\{i_1,\ldots,i_j\}$, the maps
\[\pi_*^u(BP^{((C_4))}_{i_k})\inj\pi_*^u(BP^{((C_4))}_{i_1,\ldots,i_j})\]
are split inclusions, and after factoring out $\pi_*^u(BP^{((C_4))})$, the maps
\[\iota_k:\frac{\pi_*^u(BP^{((C_4))}_{i_k})}{\pi_*^u(BP^{((C_4))})}\inj\frac{\pi_*^u(BP^{((C_4))}_{i_1,\ldots,i_j})}{\pi_*^u(BP^{((C_4))})}\]
are split inclusions with the property that $\mathrm{im}(\iota_k)\cap\mathrm{im}(\iota_{k'})=\{0\}$ for $k\neq k'$. 
Now the claim follows from the fact that
\[\frac{\pi_*^u(BP^{((C_4))}_{i_1,\ldots,i_j})}{\pi_*^u(BP^{((C_4))})}=\bg(\bigoplus\limits_{\substack{T<\{i_1,\ldots,i_j\}\\T\in\mc P_0(n)}}\frac{\pi_*^u(BP^{((C_4))}_{T})}{\pi_*^u(BP^{((C_4))})}\bg)\oplus\frac{(t_{i_1}\cdots t_{i_j}\g(t_{i_1})\cdots\g(t_{i_j}))^{<0}\pi_*^u(BP^{((C_4))})}{\pi_*^u(BP^{((C_4))})}\]
where the latter summand denotes the subgroup of $\frac{\pi_*^u(BP^{((C_4))}_{i_1,\ldots,i_j})}{\pi_*^u(BP^{((C_4))})}$ generated by monomials containing $(t_{i_1}\cdots t_{i_j}\g(t_{i_1})\cdots\g(t_{i_j}))^{-k}$ for $k>0$.
\end{proof}
The following is an immediate consequence of \cref{3.5} and \cref{3.10}:
\begin{corollary}\label{3.11}
The map $\mc S_{n,d}(\emptyset)\to\mc S_{n-1,d}(\emptyset)$ can be identified with
\[(H\un{\Z}_{(2)}\smsh\widehat{W}_{2d})\oplus\mc X_{n,d}(\emptyset)\x{\bpm 1&0\\
0&0\epm} (H\un{\Z}_{(2)}\smsh\widehat{W}_{2d})\oplus\mc X_{n-1,d}(\emptyset)\]
\end{corollary}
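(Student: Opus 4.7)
The plan is to combine the natural splitting $\mc S_{n,d}\simeq(H\un{\Z}_{(2)}\smsh\widehat{W}_{2d})\oplus\mc X_{n,d}$ established just before the statement with \cref{3.5}(2) and \cref{3.10}, so that the corollary becomes a formal consequence once one checks that the splitting survives passage to limits.

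First I would verify that the vertexwise splitting is natural in $S\in\mc P_0([n])$. This is clear from its construction: at each vertex the splitting arises from the split inclusion $\pi_{2d}^u(BP^{((C_4))})\inj\pi_{2d}^u(BP^{((C_4))}_{i_1,\ldots,i_j})$, which is manifestly natural in the subset $\{i_1,\ldots,i_j\}$, and the structure maps of the cube preserve it. Since in a stable $\infty$-category finite direct sums are biproducts and therefore commute with homotopy limits, and since both $\mc S_{n,d}$ and $\mc X_{n,d}$ are cartesian by construction, passing to the limit over $\mc P_0([n])$ yields
\[\mc S_{n,d}(\emptyset)\simeq(H\un{\Z}_{(2)}\smsh\widehat{W}_{2d})\oplus\mc X_{n,d}(\emptyset),\]
and analogously for $\mc S_{n-1,d}(\emptyset)$. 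The natural comparison map $\mc S_{n,d}(\emptyset)\to\mc S_{n-1,d}(\emptyset)$ therefore decomposes as a block-diagonal map between these direct sum decompositions; in particular the off-diagonal entries of the asserted matrix are zero.

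Next I would identify the two diagonal entries. The summand $H\un{\Z}_{(2)}\smsh\widehat{W}_{2d}$ is constant on $\mc P_0([n])$ by construction, so its homotopy limit over both $\mc P_0([n])$ and $\mc P_0([n-1])$ is itself and the induced map is the identity. For the second summand, the restriction of $\mc X_{n,d}$ along $\mc P_0([n-1])\inj\mc P_0([n])$ agrees with the partial $(n-1)$-cube underlying $\mc X_{n-1,d}$, so the comparison map on this block is precisely the canonical map
\[\mathrm{holim}_{S\in\mc P_0([n])}\mc X_{n,d}(S)\to\mathrm{holim}_{S\in\mc P_0([n-1])}\mc X_{n,d}(S).\]
By \cref{3.10}, $\mc X_{n,d}$ is a cartesian $n$-cube built from disjoint split inclusions, so \cref{3.5}(2) applies directly and this map is nullhomotopic.

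I do not expect any serious obstacle: the corollary is essentially a repackaging. The only mild check is the compatibility of the splitting with the comparison map between $\mc S_{n,d}(\emptyset)$ and $\mc S_{n-1,d}(\emptyset)$, which follows because the splitting is induced vertexwise from a natural construction in $\{i_1,\ldots,i_j\}$, and the observation that restricting $\mc X_{n,d}$ to $\mc P_0([n-1])$ is literally the partial cube underlying $\mc X_{n-1,d}$ so no reindexing is needed before invoking \cref{3.5}(2).
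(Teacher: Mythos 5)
Your proposal is correct and takes essentially the same route as the paper, which records this corollary as an immediate consequence of \cref{3.5} and \cref{3.10}: you have simply made explicit the naturality of the splitting $\mc S_{n,d}\simeq(H\un{\Z}_{(2)}\smsh\widehat{W}_{2d})\oplus\mc X_{n,d}$, its compatibility with passage to homotopy limits and with restriction along $\mc P_0([n-1])\subset\mc P_0([n])$, and the application of \cref{3.5}(2) to the second summand.
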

The canonical map $BP^{((C_4))}\to BP^{((C_4))}_{i_1,\ldots,i_j}$, by universal property,  determines compatible maps $BP^{((C_4))}\to \mc H_{n}(\emptyset)$ so that there is a map
\[BP^{((C_4))}\to\mathrm{holim}_n\mc H_n(\emptyset)\]
We will show this map is an equivalence, and this will complete the proof that $BP^{((C_4))}$ is cofree by the following:
\begin{proposition}\label{3.12} The $C_4$-spectrum $\mathrm{holim}_n\mc H_n(\emptyset)$ is cofree.
\end{proposition}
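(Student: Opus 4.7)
The plan is to apply Corollary~\cref{2.3} twice and reduce the claim to the cofreeness of the individual vertices of the cube $\mc H_n$. Suppose one knows that each vertex $\mc H_n(T) = BP^{((C_4))}[N_{C_2}^{C_4}(\ov{t_{i_1}} \cdots \ov{t_{i_j}})^{-1}]$ for nonempty $T = \{i_1, \ldots, i_j\} \in \mc P_0([n])$ is cofree. Then $\mc H_n(\emptyset) = \tilde{L}_n BP^{((C_4))}$ is a homotopy limit of cofree $C_4$-spectra indexed over $\mc P_0([n])$, hence cofree by Corollary~\cref{2.3}. A second application of Corollary~\cref{2.3} to the sequential tower $\{\mc H_n(\emptyset)\}_{n \in \N}$ then delivers that $\mathrm{holim}_n \mc H_n(\emptyset)$ is cofree.

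The substantive content is thus the cofreeness of each localized vertex $\mc H_n(T)$. The key observation is that the inverted class $N_{C_2}^{C_4}(\ov{t_{i_1}} \cdots \ov{t_{i_j}})$ sits in a grading that is a positive integer multiple of the regular representation $\p_4$ of $C_4$, so the localization is $\p_4$-periodic. Combined with the HHR Slice Theorem and the explicit slice-associated-graded computation of Proposition~\cref{3.7}, this periodicity produces a horizontal vanishing line in the slice spectral sequence of $\mc H_n(T)$, by the same mechanism HHR use to establish cofreeness of their detecting spectrum $\Omega_{\mb O}$ in \cite{HHR}. Such a vanishing line forces the Tate constructions $\mc H_n(T)^{tH}$ to vanish for nontrivial $H \subseteq C_4$, and together with the analogous $\p_2$-periodicity argument on the restriction to $C_2$, this yields the equivalences $\mc H_n(T)^H \simeq \mc H_n(T)^{hH}$ required by condition (2) of Proposition~\cref{2.1}. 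This is precisely the content alluded to in the introduction when the author writes that ``$BP_\R[\ov{v_i}^{-1}]$ is cofree for formal reasons,'' here adapted from $C_2$ to $C_4$.

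The main obstacle is this vertex-level cofreeness. Once it is in hand, the two invocations of Corollary~\cref{2.3}, first to form $\mc H_n(\emptyset)$ as a homotopy limit over $\mc P_0([n])$ and then to form $\mathrm{holim}_n \mc H_n(\emptyset)$ as a sequential limit, finish the proof with no further input.
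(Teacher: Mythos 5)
Your high-level structure matches the paper exactly: two applications of \cref{2.3} reduce the claim to cofreeness of the individual vertices $BP^{((C_4))}_{i_1,\ldots,i_j}$, and both you and the paper point to HHR Section 10 for the mechanism. The gap is in what you identify as the ``key observation'' for the vertex-level step.

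You single out the fact that $N_{C_2}^{C_4}(\ov{t_{i_1}}\cdots\ov{t_{i_j}})$ lives in degree $m\p_4$ for some $m>0$, so that the localization is $m\p_4$-periodic, and you try to run an HHR-style vanishing-line/Tate-vanishing argument from that alone. But $\p_G$-periodicity of a localization is not, by itself, what makes the HHR argument work: one can certainly invert a class in a positive multiple of $\p_G$ whose geometric fixed points are \emph{nonzero} (the target $\Phi^H$ can have plenty of homotopy in the relevant degree), and then the localization is not even free, let alone cofree. The actual hypothesis the paper verifies --- and the content ``for formal reasons'' is really pointing at --- is that
\[\Phi^{C_4}\bigl(N_{C_2}^{C_4}(\ov{t_{i_1}})\bigr)=0,\qquad \Phi^{C_2}\bigl(i^{C_4}_{C_2}N_{C_2}^{C_4}(\ov{t_{i_1}})\bigr)=\Phi^{C_2}\bigl(\ov{t_{i_1}}\cdot\ov{\g(t_{i_1})}\bigr)=0,\]
which the paper deduces from the norm/geometric-fixed-points interchange $\Phi^{C_4}\circ N_{C_2}^{C_4}\simeq\Phi^{C_2}$ and the identification of the target with (restrictions/smash powers of) $H\F_2$. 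This forces $\Phi^{C_4}$ and $\Phi^{C_2}$ of the localized ring $MU^{((C_4))}_{(2)}[N(\ov{t_{i_1}}\cdots\ov{t_{i_j}})^{-1}]$ to vanish, so that every module over it --- in particular each vertex --- has vanishing geometric fixed points at every nontrivial subgroup. That is the hypothesis HHR's Section 10 argument actually consumes.

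Relatedly, your closing step asserts that vanishing of the Tate constructions $\mc H_n(T)^{tH}$ alone yields $\mc H_n(T)^H\simeq\mc H_n(T)^{hH}$. Via the isotropy separation/Tate square, the latter equivalence is controlled by both $\Phi^H$ and $(-)^{tH}$: one needs $\Phi^H(X)\to X^{tH}$ to be an equivalence, which in the vanishing situation means \emph{both} sides vanish. Without the explicit geometric-fixed-point computation, the argument doesn't close. So the missing ingredient is the verification that the inverted norm class is killed by $\Phi^{C_4}$ and $\Phi^{C_2}$; once you add that, the two reductions via \cref{2.3} and the appeal to HHR Section 10 proceed as you outline.
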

\begin{proof}
By \cref{2.3}, the category of cofree $C_4$-spectra is closed under limits, hence it suffices to show that each $\mc H_n(\emptyset)$ is cofree. There is by definition an equivalence
\[\mc H_n(\emptyset)\x{\simeq}\mathrm{holim}_{T\in\mc P_0([n])}\mc H_n(T)=\mathrm{holim}_{\{i_1,\ldots,i_j\}\in\mc P_0([n])}BP^{((C_4))}_{i_1,\ldots,i_j}\]
so it suffices to show that each $BP^{((C_4))}_{i_1,\ldots,i_j}$ is cofree. $BP^{((C_4))}_{i_1,\ldots,i_j}$ is a module over $MU^{((C_4))}_{(2)}[N_{C_2}^{C_4}(\ov{t_{i_1}}\cdots\ov{t_{i_j}})^{-1}]$ and so we may argue as in (\cite{HHR}, Section 10): we have that 
\[\Phi^{C_4}(MU^{((C_4))}_{(2)}[N_{C_2}^{C_4}(\ov{t_{i_1}}\cdots\ov{t_{i_j}})^{-1}])\simeq \Phi^{C_2}(MU^{((C_4))}_{(2)}[N_{C_2}^{C_4}(\ov{t_{i_1}}\cdots\ov{t_{i_j}})^{-1}])\simeq*\]
as
\[\Phi^{C_4}(N_{C_2}^{C_4}(\ov{t_{i_1}}))=\Phi^{C_2}(\ov{t_{i_1}})=0\]
and similarly
\[\Phi^{C_2}(N_{C_2}^{C_4}(\ov{t_{i_1}}))=\Phi^{C_2}(i^{C_4}_{C_2}N_{C_2}^{C_4}(\ov{t_{i_1}}))=\Phi^{C_2}(\ov{t_{i_1}}\cdot\ov{\g(t_{i_1})})=0\]
\end{proof}
To show that the map 
\[BP^{((C_4))}\to\mathrm{holim}_n\mc H_n(\emptyset)\]
is an equivalence, we work instead in filtered $C_4$-spectra, where by functoriality we have a map
\[f:\mc T(BP^{((C_4))})\to\mathrm{holim}_n\bg(\mathrm{holim}_{\{i_1,\ldots,i_j\}\in\mc P_0([n])}\mc T(BP^{((C_4))}_{i_1,\ldots,i_j})\bg)\]
We will show that $f$ is an equivalence, for which we need the following lemma:
\begin{lemma}\label{3.13}
Let $\mc C$ be a co-complete stable $\infty$-category. Suppose $\mc T_1,\mc T_2\in \mathrm{Fun}(\Z^{op},\mc C)$ are such that
\[\mathrm{colim}_k\tilde{P}^k(\mc T_1)\simeq \mathrm{colim}_k\tilde{P}^k(\mc T_2)\simeq*\]
If $\phi:\mc T_1\to\mc T_2$ has the property that $\tilde{P}^k_k(\phi)$ is an equivalence for all $k\in\Z$, then $\phi$ is an equivalence.
\end{lemma}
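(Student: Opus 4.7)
The plan is to reduce the claim to showing that the fiber $F := \mathrm{fib}(\phi)$, formed in $\mathrm{Fun}(\Z^{op},\mc C)$, is the zero object. Since equivalences in a functor category are detected pointwise, this amounts to showing $\tilde{P}^k(F) \simeq *$ for every $k \in \Z$. The first observation I would make is that both $\tilde{P}^k$ (evaluation at $k$) and $\tilde{P}^k_k$ (the fiber of a map between such evaluations) are exact functors, so they preserve fibers; in particular $\tilde{P}^k_k(F) \simeq \mathrm{fib}(\tilde{P}^k_k(\phi))$, which is $*$ by hypothesis.

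The first main step will be to exploit the defining fiber sequence
\[\tilde{P}^k_k(F) \to \tilde{P}^k(F) \to \tilde{P}^{k-1}(F)\]
in $\mc C$: since the left term is trivial for every $k$, each structure map $\tilde{P}^k(F) \to \tilde{P}^{k-1}(F)$ of the $\Z^{op}$-diagram $F$ is an equivalence. Thus $F$ is essentially constant as a $\Z^{op}$-indexed diagram in $\mc C$.

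To pin down this common value, I would compute $\colim_k \tilde{P}^k(F)$ in two ways. On the one hand, an essentially constant diagram over a filtered index category has colimit equivalent to any of its values, so $\colim_k \tilde{P}^k(F) \simeq \tilde{P}^{k_0}(F)$ for any fixed $k_0$. On the other hand, since filtered colimits in a stable cocomplete $\infty$-category preserve finite limits, I have
\[\colim_k \tilde{P}^k(F) \simeq \mathrm{fib}\bigl(\colim_k \tilde{P}^k(\mc T_1) \to \colim_k \tilde{P}^k(\mc T_2)\bigr) \simeq \mathrm{fib}(* \to *) \simeq *,\]
using the colimit hypothesis on $\mc T_1$ and $\mc T_2$. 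Combining the two computations, $\tilde{P}^{k_0}(F) \simeq *$ for every $k_0$, which gives $F \simeq *$ in $\mathrm{Fun}(\Z^{op}, \mc C)$ and therefore that $\phi$ is an equivalence.

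The point that requires the most care is the interchange of $\colim_k$ with fibers in the last display. This is where the hypotheses that $\mc C$ is stable and cocomplete are essentially used, and it depends on reading $\colim_k$ as the colimit along the transition maps of the tower, i.e., as a colimit indexed by the filtered $\infty$-category $\Z^{op}$; modulo this interpretation, the exactness of filtered colimits in a stable cocomplete $\infty$-category is standard.
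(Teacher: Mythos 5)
Your argument is correct and is essentially the paper's own proof: the paper passes to the cofiber of $\phi$ rather than the fiber, uses the vanishing of $\tilde{P}^k_k$ to see the tower is constant, and concludes from the vanishing of the colimit, exactly as you do. The only difference is cosmetic (fiber versus cofiber, which agree up to shift in a stable $\infty$-category), and you usefully spell out the exactness-of-colimits step that the paper leaves implicit.
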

\begin{proof}
Let $\mc T_3=\mathrm{cofib}(\phi:\mc T_1\to\mc T_2)$, then it suffices to show that $\mc T_3\simeq *$. We have that $\tilde{P}^k_k(\mc T_3)\simeq*$ for all $k\in\Z$ so that 
\[\tilde{P}^k(\mc T_3)\to\tilde{P}^{k-1}(\mc T_3)\]
is an equivalence for all $k\in\Z$. Therefore $\mc T_3$ is equivalent to a constant tower, but since $\colim_k\tilde{P}^k(\mc T_3)\simeq*$, we must have $\mc T_3\simeq *$.
\end{proof}
\begin{theorem}\label{3.14}
The $C_4$-spectrum $BP^{((C_4))}$ is cofree, independent of Lin's Theorem.
\end{theorem}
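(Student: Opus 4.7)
The plan is to show that the natural map
\[
f\colon BP^{((C_4))}\to \mathrm{holim}_n\mc H_n(\emptyset)
\]
is an equivalence, at which point \cref{3.12} immediately yields cofreeness. To do this, I would work in filtered $C_4$-spectra and establish the corresponding equivalence $\tilde f\colon \mc T(BP^{((C_4))})\to\mathrm{holim}_n\mathrm{holim}_{T\in\mc P_0([n])}\mc T(BP^{((C_4))}_T)$ using \cref{3.13}; taking homotopy limits along the slice tower then recovers $f$.

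The central step is verifying that $\tilde P^k_k(\tilde f)$ is an equivalence for every $k$. Since $\tilde P^k_k = \mathrm{fib}(\mathrm{ev}_k\to\mathrm{ev}_{k-1})$ commutes with all pointwise limits in the functor category, on slice $k=2d$ the target evaluates to $\mathrm{holim}_n\mc S_{n,d}(\emptyset)$. The splitting $\mc S_{n,d}(\emptyset)\simeq(H\un{\Z}_{(2)}\smsh\widehat{W}_{2d})\oplus\mc X_{n,d}(\emptyset)$ from \cref{3.10} together with \cref{3.11} shows that the transition maps in this tower are the identity on the first summand and null on the second. A tower of spectra with null transition maps has vanishing homotopy limit (each homotopy Mackey functor has $\lim=\lim^1=0$), so
\[
\mathrm{holim}_n\mc S_{n,d}(\emptyset)\simeq H\un{\Z}_{(2)}\smsh\widehat{W}_{2d}\simeq P^{2d}_{2d}(BP^{((C_4))})
\]
by \cref{3.7}, and $\tilde P^{2d}_{2d}(\tilde f)$ becomes the identity under this identification. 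Odd slices vanish on both sides.

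The remaining conditions of \cref{3.13} require $\mathrm{colim}_k\tilde P^k$ to be contractible on both source and target. For the source this is immediate, as $BP^{((C_4))}$ is slice-connective. The main obstacle is the target: the vanishing of $\tilde P^k_k$ on negative slices established above forces the tower $\{\tilde P^k\}_{k\leq -1}$ of the target to stabilize (transitions become equivalences), so the colim reduces to a single value that must be shown contractible. I expect to handle this by a direct argument that $\mathrm{holim}_n P^{-1}\mc H_n(\emptyset)\simeq *$, using that $P^{-1}\mc H_n(\emptyset)$ is assembled from slice pieces $\mc X_{n,d}(\emptyset)$ whose transitions in $n$ are null, and reducing via slice convergence to the componentwise null-transition argument already used above. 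With both hypotheses of \cref{3.13} verified, $\tilde f$ is an equivalence of filtered $C_4$-spectra; passing to homotopy limits along the slice tower then recovers $f$ as an equivalence of $C_4$-spectra, completing the proof via \cref{3.12}.
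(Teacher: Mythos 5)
Your overall strategy is the same as the paper's: pass to filtered $C_4$-spectra, apply \cref{3.13} to the map $\mc T(BP^{((C_4))})\to\mathrm{holim}_n\mathrm{holim}_{T\in\mc P_0([n])}\mc T(BP^{((C_4))}_T)$, identify the even slices of the target as $\mathrm{holim}_n\mc S_{n,d}(\emptyset)$, and use the splitting from \cref{3.10} together with \cref{3.11} to see that the $\mc X_{n,d}$-summand is pro-zero, so that $\widetilde{P}^{2d}_{2d}$ of the map is an equivalence. That part is correct and matches the paper, and so does the final appeal to \cref{3.12}.

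The gap is in the second hypothesis of \cref{3.13}: the vanishing of $\mathrm{colim}_k\widetilde{P}^k$ on the target, i.e.\ strong convergence of the modified slice filtration on $\mathrm{holim}_n\mc H_n(\emptyset)$. Your reduction is fine as far as it goes (negative slices of the target vanish, so the tower stabilizes for $k\le-1$ and the colimit is $\widetilde{P}^{-1}$ of the target), but the remaining claim, that $\mathrm{holim}_n\mathrm{holim}_{T\in\mc P_0([n])}P^{-1}(BP^{((C_4))}_T)\simeq*$, is precisely the substance of the theorem at this stage, and you only say you ``expect'' to handle it. The sketch you give is essentially circular: the localized spectra $BP^{((C_4))}_T$ are not slice-connective, their slice towers extend infinitely downward, and asserting that $\widetilde{P}^{-1}$ of the target is ``assembled from the slice pieces $\mc X_{n,d}(\emptyset)$'' presupposes that the filtration $\{\mathrm{holim}_T P^k(BP^{((C_4))}_T)\}_{k\le-1}$ is itself strongly convergent, which is what is being proven; note also that this filtered object is not the slice tower of $\mc H_n(\emptyset)$, so ``slice convergence'' cannot simply be cited. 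The missing ingredient, which is exactly what the paper uses, is the uniform estimate (\cite{HHR}, 4.42) that $\un{\pi}_l(P^kX)=0$ for $l\ge\lfloor(k+1)/4\rfloor$ when $k<0$, for \emph{every} $C_4$-spectrum $X$. Because the bound depends only on $k$ and $l$, it passes through the finite limits over $\mc P_0([n])$ and through $\mathrm{holim}_n$ via the Milnor sequence, so for fixed $l$ the homotopy of the tower vanishes for $k\ll0$ and the colimit in $k$ is contractible. Your outline can be completed by inserting this estimate (either as the paper does, or to bound the filtration on $\un{\pi}_l$ of each $\mathrm{holim}_TP^{-1}(BP^{((C_4))}_T)$ uniformly in $n$ so that your null-transition argument yields a pro-zero system), but as written the key convergence input is absent.
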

\begin{proof}
It suffices to show that $f$ is an equivalence, as
\[\mathrm{holim}\bg(\mathrm{holim}_{\{i_1,\ldots,i_j\}\in\mc P_0([n])}\mc T(BP^{((C_4))}_{i_1,\ldots,i_j})
\bg)\simeq\mc H_n(\emptyset)\]
and 
\[\mathrm{holim}(\mc T(BP^{((C_4))}))\simeq BP^{((C_4))}\]
Note that 
\aln{
\widetilde{P}^k_k\bg(\mathrm{holim}_{\{i_1,\ldots,i_j\}\in\mc P_0([n])}\mc T(BP^{((C_4))}_{i_1,\ldots,i_j})\bg)&\simeq \mathrm{holim}_{\{i_1,\ldots,i_j\}\in\mc P_0([n])}\widetilde{P}^k_k\bg(\mc T(BP^{((C_4))}_{i_1,\ldots,i_j})\bg)\\
&\simeq\begin{cases}*&k=2d-1\\\mc S_{n,d}(\emptyset)&k=2d\end{cases}
}
The map $\widetilde{P}^{2d}_{2d}(f)$ is then identified with the map
\[H\un{\Z}_{(2)}\smsh\widehat{W}_{2d}\to\mathrm{holim}_n((H\un{\Z}_{(2)}\smsh\widehat{W}_{2d})\oplus \mc X_{n,d})\simeq \mathrm{holim}_n(H\un{\Z}_{(2)}\smsh\widehat{W}_{2d})\oplus\mathrm{holim}_n\mc X_{n,d}(\emptyset)  \]
By \cref{3.11}, the lefthand summand is constant in $n$, and the righthand summand is pro-zero, hence the map is an equivalence.

To establish that $f$ is an equivalence, by \cref{3.13}, it suffices now to show that 
\[\mathrm{colim}_k\widetilde{P}^{k}\bg(\mathrm{holim}_n\bg(\mathrm{holim}_{\{i_1,\ldots,i_j\}\in\mc P_0([n])}\mc T(BP^{((C_4))}_{i_1,\ldots,i_j})\bg)\bg)\simeq *\]
i.e. that the filtration on $\mathrm{holim}_n\mc H_n(\emptyset)$ \ti{strongly} converges. Note that by (\cite{HHR}, 4.42), if $X\in Sp^{C_4}$, then $\un{\pi}_{l}(P^{k}X)=0$ for $l\ge \lfloor (k+1)/4\rfloor$ when $k<0$ and for $k>l$ when $k\ge0$. Taking limits, it follows that
\[\un{\pi}_{l}\bg(\widetilde{P}^{k}\bg(\mathrm{holim}_{\{i_1,\ldots,i_j\}\in\mc P_0([n])}\mc T(BP^{((C_4))}_{i_1,\ldots,i_j})\bg)\bg)=0\]
in the same range, and so
\[\un{\pi}_{l}\bg(\widetilde{P}^{k}\bg(\mathrm{holim}_n\bg(\mathrm{holim}_{\{i_1,\ldots,i_j\}\in\mc P_0([n])}\mc T(BP^{((C_4))}_{i_1,\ldots,i_j})\bg)\bg)\bg)=0\]
in the same range by the Milnor sequence. It follows that, for any $l$, taking the colimit as $k\to-\infty$ of $\un{\pi}_l$ gives zero.
\end{proof}

\begin{remark}
This result recovers the Hu-Kriz result that $BP_\R$ is cofree: since $BP^{((C_4))}$ is cofree, $i^{C_4}_{C_2}BP^{((C_4))}=BP_\R\smsh BP_\R$ is cofree, hence so is the retract $BP_\R$. Alternatively, as discussed in the introduction, one may argue similarly to \cref{3.14} to show that $BP_\R$ is cofree, and the result in this case is due to Mike Hill. 
\end{remark}

\bibliographystyle{plain}

\end{document}